\theoremstyle{plain}
\newtheorem{theorem}{Theorem}[section]
\newtheorem{proposition}[theorem]{Proposition}
\newtheorem{lemma}[theorem]{Lemma}
\newtheorem{corollary}[theorem]{Corollary}
\theoremstyle{remark}
\newtheorem{example}[theorem]{Example}
\newtheorem{remark}[theorem]{Remark}
\newtheorem{algorithm}[theorem]{Algorithm}
\begin{document}

\begin{center}

  \Large
  {\bf
    A measure-on-graph-valued diffusion:\\
    a particle system with collisions and their
    applications} 
     
  \normalsize

  \bigskip By \bigskip

  \textsc{Shuhei Mano}

  \smallskip
  
  The Institute of Statistical Mathematics, Japan


\end{center}

\small

{\bf Abstract.}
A diffusion taking value in probability measures on
a graph with a vertex set $V$, $\sum_{i\in V}x_i\delta_i$,
is studied. The masses on each vertices
satisfy the stochastic differential equation of the form
$dx_i=\sum_{j\in N(i)}\sqrt{x_ix_j}dB_{ij}$
on the simplex, where $\{B_{ij}\}$ are independent
standard Brownian motions with skew symmetry and $N(i)$
is the neighbour of the vertex $i$. A dual Markov chain
on integer partitions to the Markov semigroup associated
with the diffusion is used to show that the support of
an extremal stationary state of the adjoint semigroup is
an independent set of the graph. We also investigate
the diffusion with a linear drift, which gives a killing
of the dual Markov chain on a finite integer lattice.
The Markov chain is used to study the unique stationary
state of the diffusion, which generalizes the Dirichlet
distribution. Two applications of the diffusions are
discussed: analysis of an algorithm to find an independent
set of a graph, and a Bayesian graph selection based on
computation of probability of a sample by using coupling
from the past.

\smallskip

{\it Key Words and Phrases.}
Bayesian graph selection, coupling from the past,
integer partition, interacting particle system,
independent set finding, measure-valued diffusion

\smallskip

2020 {\it Mathematics Subject Classification Numbers.}
60K35, 05C81, 60J70, 60J90, 65C05

\normalsize

\section{introduction}\label{sect:intr} 

Consider a finite graph $\mathcal{G}=(V,E)$ consisting of
vertices $V=\{1,\ldots,r\}$, $r\in\{2,3,\ldots\}$
and edges $E$. Throughout this paper, a graph is undirected
and connected. The neighbour of the vertex $i\in V$ is
denoted by $N(i):=\{j\in V:j\sim i\}$, where $j\sim i$
means that $i$ and $j$ are adjacent. The degree of
the vertex $i$ is denoted by $d_i:=|N(i)|$, which is
the cardinality of the set $N(i)$. An independent set
of $\mathcal{G}$ is a subset of $V$ such that no two of
which are adjacent. In other words, a set of vertices is
independent if and only if it is a clique in the graph
complement of $\mathcal{G}$.

If two vertices of a graph $\mathcal{G}$ has precisely
the same neighbour, throughout this paper, we call
the graph obtained by identifying these two vertices
with keeping the adjacency a {\it reduced graph} of
$\mathcal{G}$. 

Let $\mathcal{P}(\Delta_{r-1})$ be the totality of
probability measures on the simplex
\[
  \Delta_{r-1}=\{(x_1,\ldots,x_r)
  \in\mathbf{R}^V_{\ge 0}:\sum_{i\in V}x_i=1\}
\]
equipped with the topology of weak convergence.
Itoh et al.~\cite{IMS98} discussed
a diffusion taking value in probability measures on
a graph $\mathcal{G}$, whose state is identified with
a probability measure $\sum_{i=1}^r x_i(t)\delta_i$
and the masses on each vertices
\[
  \{x(t)=(x_1(t),\ldots,x_r(t))\in\Delta_{r-1},P_x:t\ge 0\}
  \in \mathcal{P}(\Delta_{r-1})
\]
starts from $x(0)=x$ and satisfies
the stochastic differential equation of the form
\begin{equation}\label{sde}
  dx_i=\sum_{j\in N(i)}\sqrt{x_ix_j}dB_{ij},
  \quad i\in V,
\end{equation}
where $\{B_{ij}\}_{i\sim j\in V}$ are independent standard
Brownian motions with skew-symmetry $B_{ij}=-B_{ji}$.
The generator of the diffusion $L$ operates on a function
$f\in C_0^2(\Delta_{r-1})$ as
\begin{equation}\label{gen}
  Lf=\frac{1}{2}\sum_{i,j\in V}\sigma_{ij}(x)
  \frac{\partial^2f}{\partial x_i\partial x_j},
\end{equation}
where
$\sigma_{ii}(x)=x_i\sum_{j\in N(i)}x_j$,
$\sigma_{ij}(x)=-x_ix_j$, $i\sim j$, and
$\sigma_{ij}(x)=0$ otherwise, and $C_0^2(\Delta_{r-1})$
is the totality of functions with continuous derivative
up to the second order and compact support in $\Delta_{r-1}$.

We say a face of the simplex $\Delta_{r-1}$
{\it corresponds to} a set of vertices $U\subset V$
if the face is the interior of the convex hall of
$\{e_i:i\in U\}$ denoted by
${\rm int}({\rm conv}\{e_i:i\in U\})$, where
$\{e_1,\ldots,e_r\}$ is the set of standard basis of
the vector space $\mathbf{R}^r$. If $U$ consists of
a single vertex, say $e_i$, ${\rm int}({\rm conv}\{e_i\})$
should be read as $e_i$. An observation on the diffusion
is as follows.

\begin{proposition}\label{prop:start}
  Every point in a face of the simplex $\Delta_{r-1}$
  corresponding to an independent set $V_I\subsetneq V$
  of a graph $\mathcal{G}$ is a fixed point of
  the stochastic differential equation \eqref{sde}.
  Namely,
  \begin{equation}\label{prop:start:eq}
    x \in {\rm int}({\rm conv}\{e_i:i\in V_I\})
  \end{equation}
  is a fixed point.
\end{proposition}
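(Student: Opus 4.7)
The plan is to verify directly that at any point $x$ satisfying \eqref{prop:start:eq}, every diffusion coefficient of the SDE \eqref{sde} vanishes, so that the constant path $x(t)\equiv x$ is a solution. Since \eqref{sde} has no drift term, it suffices to show $\sqrt{x_i x_j}=0$ for every edge $i\sim j$.

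First I would unpack the hypothesis. A point in ${\rm int}({\rm conv}\{e_i:i\in V_I\})$ is characterized by $x_i>0$ for $i\in V_I$, $x_i=0$ for $i\notin V_I$, and $\sum_{i\in V_I}x_i=1$. Next, fix an edge $\{i,j\}\in E$. I would argue by cases: if both $i,j\in V_I$, this contradicts the independence of $V_I$; hence at least one of $i,j$ lies outside $V_I$, forcing $x_ix_j=0$ and therefore $\sqrt{x_ix_j}=0$. Consequently, for every $i\in V$,
\[
  \sum_{j\in N(i)}\sqrt{x_ix_j}\,dB_{ij}=0,
\]
so the constant process $x(t)=x$ trivially satisfies \eqref{sde}.

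As a sanity check I would also note the corresponding statement at the level of the generator \eqref{gen}: the same case analysis gives $\sigma_{ij}(x)=-x_ix_j=0$ for $i\sim j$, and $\sigma_{ii}(x)=x_i\sum_{j\in N(i)}x_j=0$ (either $x_i=0$, or $i\in V_I$ forces $x_j=0$ for all neighbours $j$), so $Lf(x)=0$ for every $f\in C_0^2(\Delta_{r-1})$, confirming that such $x$ is a stationary point of the dynamics.

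There is no real obstacle here; the proof is essentially a direct reading of the definitions of independent set and of the face ${\rm int}({\rm conv}\{e_i:i\in V_I\})$ against the structure of the coefficients in \eqref{sde}. The only subtle point to make explicit is the equivalence between ``fixed point of the SDE'' and ``simultaneous vanishing of all diffusion coefficients together with the drift,'' which holds because \eqref{sde} is driftless.
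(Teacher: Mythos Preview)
Your proof is correct and takes a slightly different route from the paper's. You verify directly that every diffusion coefficient $\sqrt{x_ix_j}$ vanishes at the initial point $x$, so the constant path solves \eqref{sde}. The paper instead invokes the fact that each coordinate $x_i(t)$ is a nonnegative martingale: for $i\notin V_I$ one has $x_i(0)=0$, hence $\mathbf{E}_x[x_i(t)]=0$ and so $x_i(t)=0$ for all $t$ almost surely; then for $i\in V_I$ every neighbour $j\in N(i)$ lies outside $V_I$, so \eqref{sde} collapses to $dx_i=0$. Your argument is more elementary and self-contained, requiring only the observation that independent sets meet every edge in at most one vertex. The paper's martingale argument, however, delivers a bit more: it shows that \emph{any} solution of \eqref{sde} starting from such an $x$ must remain constant, not merely that the constant path is \emph{a} solution. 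This distinction matters here because, as the paper itself notes in Section~\ref{sect:disc}, pathwise uniqueness for \eqref{sde} has not been established in general; the martingale route sidesteps that gap for these particular initial conditions.
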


\begin{proof}
  For each $i\in V$, $x_i(t)$ is a martingale with
  respect to the natural filtration generated by
  the diffusion $(x(t),P_x)$. The condition
  \eqref{prop:start:eq} implies $x_i=0$ for each
  $i\notin V_I$, $\mathbf{E}(x_i(t))=0$ and thus
  $x_i(t)=0$, $\forall t\ge 0$ almost surely.
  Then, for each $i\in V_I$, \eqref{sde} reduces to
  $dx_i(t)=0$ and we have $x_i(t)=x_i$, $\forall t\ge 0$.
  Therefore, \eqref{prop:start:eq} is a fixed point
  of \eqref{sde}.
\end{proof}

Itoh et al.~\cite{IMS98} discussed the diffusion
$(x(t),P_x)$ as an approximation of the following
discrete stochastic model. Consider a system of $N$
particles, where each particle is assigned to one
of the vertex in $V$, and a continuous-time Markov chain
\[
  \{(n_1(s),\ldots,n_r(s))\in\mathbf{N}_0^V:
  n_1(s)+\cdots+n_r(s)=N,s\ge 0\},
  \quad \mathbf{N}_0:=\{0,1,\ldots\}.
\]
Here, $n_i(s)$ is the number of particles assigned to
the vertex $i\in V$ at time $s$. At each instant of
time, two of the $N$ particles are chosen at uniformly
random. If particles at $i\sim j$ vertices are chosen,
one of the particles is chosen with equal probabilities
and assigned to another vertex. It causes a transition
from $(i,j)$ to $(i,i)$ or $(j,j)$ with equal
probabilities. This stochastic model seems to have
various applications. Tainaka et al. \cite{TIYA06}
discussed this stochastic model as a model of
a speciation process caused by geography. Ben-Haim
et al. \cite{BKR03} considered a slightly modified
version, in which a transition from $(i-1,i+1)$ to $(i,i)$,
$i\in\{2,\ldots,r-1\}$ occurs on the one-dimensional
graph, where $i\sim i+1$, $i\in\{1,\ldots,r-1\}$. They
called it ``compromise process'', because if we regard
the vertices as political positions, then a transition
is a compromise. The process $x^N(s):=n(s)/N$ converges
weakly to the diffusion $(x(t),P_x)$: if
$x^N(0)\Rightarrow x(0)$, then
$x^N([\cdot N(N-1)/2])\Rightarrow x(\cdot)$ as
$N\to\infty$ in the space of right continuous functions
with left limits (see Theorem 10.3.5 of \cite{EK86}). 

Apart from an approximation of the discrete stochastic
model discussed above, the diffusion $(x(t),P_x)$ seems
to appear in various contexts. The diffusion on
a complete graph appears as an approximation of a quite
different discrete stochastic model, called the Wright--Fisher
model in population genetics, which evolves by
repetition of multinomial sampling of fixed number of
particles (see, e.g., Chapter 10 of \cite{EK86}, for
the details). The class of measure-valued diffusions
are called Fleming--Viot processes. Such diffusions
appear as prior distributions in Bayesian statistics.

As we will see below, the support of an extremal
stationary state of the semigroup associated with
the diffusion $(x(t),P_x)$ is a face of $\Delta_{r-1}$
corresponds to an independent set of $\mathcal{G}$. In
this sense, the diffusion can be regarded as
an independent set finding in graph theory. Some
problems related with independent set finding, such
as the maximum independent set problem, is known to
be NP-hard, so it is believed that there is no
efficient algorithm to solve it. Therefore,
algorithms to find maximal independent sets are
useful to obtain practical solutions. For example,
Luby \cite{Luby86} discussed a parallel algorithm for
finding maximal independent sets, which was derived
from a stochastic algorithm to find a maximal
independent set. His algorithm is based on a step
to find an independent set based on random permutation
executed by $O(|E|)$ processors in time $O(1)$
for large $|E|$.

Let us assume there exists a strongly continuous
Markov semigroup $\{T_t:t\ge 0\}$ associated with
the diffusion $(x(t),P_x)$ governed by the generator
\eqref{gen} such that
\[
  T_t 1=1 \quad \text{and} \quad T_t f \ge 0,~\forall f
  \in C(\Delta_{r-1}) ~\text{satisfying}~ f\ge 0,
\]
where $C(\Delta_{r-1})$ is the totality of continuous
functions on $\Delta_{r-1}$, and
\begin{equation}\label{semi}
  T_t f-f=\int_0^t T_s Lfds, \quad \forall
  f\in C_0^2(\Delta_{r-1}).
\end{equation}
The existence of such semigroup for complete graphs was
proven by Ethier \cite{Ethier76}. For the solution of
the stochastic differential equation \eqref{sde}, we have
\[
  T_t f(x)=\mathbf{E}_x\{f(x(t))\}.
\]
Let us denote by $\{T_t^*:t\ge 0\}$ the adjoint
semigroup on $\mathcal{P}(\Delta_{r-1})$ induced by
$\{T_t:t\ge 0\}$. Consider the totality of all fixed
points of $\{T_t^*\}$:
\[
  \mathcal{S}=\{\mu\in\mathcal{P}(\Delta_{r-1}):
  T_t^*\mu=\mu, \forall t\ge 0\}.
\]
We call each element of $\mathcal{S}$
a {\it stationary state} of $\{T_t^*\}$.
A stationary state $\nu$ satisfies
\begin{equation}\label{stat}
  \langle \nu,Lf\rangle=\int Lf(x)\nu(dx)=0,
  \quad \forall f\in C_0^2(\Delta_{r-1}).
\end{equation}
The set $\mathcal{S}$ is non-empty and convex.
The totality of the extremal elements of stationary
states will be denoted by $\mathcal{S}_{\rm ext}$.
Namely, a stationary state $\nu$ is uniquely
represented as
\[
  \nu=\sum_{i=1}^s p_i\nu_i, \quad \nu_1,\ldots,\nu_s
  \in \mathcal{S}_{\rm ext}
\]
for some $p\in\Delta_{s-1}$, $s\in\{2,3,\ldots\}$.
In Theorem~\ref{theo:stat} we see that a support of
an extremal stationary state of the diffusion $(x(t),P_x)$
is a face of $\Delta_{r-1}$ corresponding to
an independent set of $\mathcal{G}$.

In this paper, we will use the term {\it support} in
a sloppy sense. Namely, positivity of a stationary state
is not assumed otherwise stated. In fact,
Proposition~\ref{prop:start} implies that if
the diffusion $(x(t),P_x)$ starts from any point
$x$ in ${\rm int}({\rm conv}\{e_i:i\in V_I\})$,
the stationary state is $\delta_x$, or an atom at $x$.
In this situation, we say the support is
${\rm int}({\rm conv}\{e_i:i\in V_I\})$. In other words,
if a stationary state does not have probability mass
anywhere of an open set, then we say the set is not
the support of the stationary state. Some examples are
as follows.

\begin{example}\label{exam:K}
  Let $\mathcal{G}=K_r$, which is a complete graph consisting
  of $r$ vertices. Since each vertices are maximally
  independent, a stationary state is represented as
  $\sum_{i=1}^rp_i\delta_{{e}_i}$, where
  $\mathcal{S}_{\rm ext}=\{\delta_{{e}_i}:i\in \{1,\ldots,r\}\}$.
  For the solution of the stochastic differential equation
  \eqref{sde}, $p_i$ is the absorption probabilities to
  the vertex $i\in V$. Since $x_i(t)$ is a martingale,
  we know $p_i=x_i(0)$.
\end{example}

\begin{example}\label{exam:C}
  Let $\mathcal{G}=C_r$ for an even positive integer $r$,
  which is a cycle graph consisting of $r$ vertices, i.e.,
  $i\sim i+1$ $\text{mod}~r$. The maximal independent sets
  are the set of all even integer vertices and that of all odd
  integer vertices. When $r=4$, we have independent sets
  $\{1\}$, $\{2\}$, $\{3\}$, $\{4\}$, $\{1,3\}$, and $\{2,4\}$.
  The supports of extremal stationary states are the faces
  $e_i$, $i\in\{1,\ldots,4\}$,
  ${\rm int}({\rm conv}\{e_1,e_3\})$, and
  ${\rm int}({\rm conv}\{e_2,e_4\})$.
  The totality of the extremal stationary states is
  $\mathcal{S}_{\rm ext}=\{\delta_{e_1},\delta_{e_2},\delta_{e_3},\delta_{e_4},\nu_{1,3},\nu_{2,4}\}$, where $\nu_{1,3}$ and $\nu_{2,4}$
  are densities (not necessarily strictly positive) on
  ${\rm int}({\rm conv}\{e_1,e_3\})$ and
  ${\rm int}({\rm conv}\{e_2,e_4\})$, respectively.
  Therefore, a stationary state is represented as
  $\sum_{i=1}^4 p_i\delta_{{e}_i}+p_{1,3}\nu_{1,3}+p_{2,4}\nu_{2,4}$.
\end{example}

\begin{example}\label{exam:Krs}
  Let $\mathcal{G}=K_{r,s}$ for a positive integers $r$ and
  $s$, which is a complete bipartite graph consisting of two
  disjoint and maximal independent sets of $r$ and $s$ vertices.
  For a graph $K_{3,2}$ whose maximal independent sets are
  $\{1,2,3\}$ and $\{4,5\}$, a stationary state may be represented as
  $\sum_{i=1}^5 p_i\delta_{{e}_i}+p_{1,2}\nu_{1,2}+p_{1,3}\nu_{1,3}+p_{2,3}\nu_{2,3}+p_{4,5}\nu_{4,5}+p_{1,2,3}\nu_{1,2,3}$.
\end{example}

To obtain an explicit expression for the stationary states
is a challenging problem. Itoh et al. \cite{IMS98}
successfully obtained an explicit expression
for the stationary states for a star graph $S_2$, where
a star graph $S_{r-1}$, $r\ge 3$ is a complete bipartite
graph $K_{1,r-1}$, and the vertices of $S_{r-1}$ will be
numbered such that $1\sim i$, $i\in\{2,\ldots,r\}$.
A stationary state may be represented as
$\sum_{i=1}^3p_i\delta_{e_i}+p_{2,3}\nu_{2,3}$.
If we identify the vertices $\{2,3\}$, the star graph $S_2$
is reduced to a complete graph $K_2$. With using arguments for
a complete graph in Example~\ref{exam:K}, we know
$p_1=x_1$ and $p_2+p_{2,3}\nu_{2,3}+p_3=x_2+x_3$.
Itoh et al~\cite{IMS98} obtained an explicit expression
for the diffusion starts from
$x\notin \{e_1,e_2,e_3,{\rm int}({\rm conv}\{e_2,e_3\})\}$:
\begin{equation}\label{p_S2}
  p_i=\frac{x_1}{2}
  \left\{\frac{2-x_1}{\sqrt{(2-x_1)^2-4x_i}}-1\right\},
  \qquad i\in\{2,3\}
\end{equation}
by using martingales introduced in Section~\ref{sect:mom}.
This result is for a specific graph, but can be applied to
other graphs reducible to $S_2$. For example, the four-cycle graph
$C_4$ discussed in Example~\ref{exam:C} can be reduced to
$S_2$. Explicit expressions for $p_i$, $i\in\{1,2,3,4\}$
are immediately obtained.

This paper is organized as follows.
In Section~\ref{sect:mom} the martingales used
by Itoh et al. \cite{IMS98} are revisited in
a slightly generalized form. An interpretation of
the martingales is presented in Section~\ref{sect:dual}.
A duality relation between the Markov semigroup
associated with the diffusion and a Markov chain
on the set of ordered integer partitions is
established. The dual Markov chain is studied
and used to show that the support of an extremal
stationary state of the adjoint semigroup is
an independent set of the graph.
Section~\ref{sect:drift} we investigate
the diffusion with a linear drift, which gives
a killing of the dual Markov chain on a finite
integer lattice. The Markov chain is studied
and used to study the unique stationary state
of the diffusion, which generalizes the Dirichlet
distribution.
In Section~\ref{sect:appl} two applications of
the diffusions are discussed: analysis of
an algorithm to find an independent set of
a graph, and a Bayesian graph selection based on
computation of probability of a sample by using
coupling from the past. Section~\ref{sect:disc}
is devoted to discussion of open problems.

\section{Invariants among moments}\label{sect:mom}

For a graph $\mathcal{G}=(V,E)$, $r=|V|$, an element
$a\in\mathbf{N}_0^V$ with $|a|:=a_1+\cdots+a_r<\infty$ will
be denoted by
$a=a_1e_1+\cdots+a_re_r$. We will use multi-index notation;
a monomial $\prod_{i}x_i^{a_i}$ is simply written as $x^a$.

For star graphs $S_{r-1}$ Itoh et al.~\cite{IMS98} noticed
the following homogeneous polynomials of arbitrary order
$n\ge r$:
\[
  \sum_{a_2+\cdots+a_r=n, a\ge 1}
  \left(\begin{array}{c}n-r+1\\a-1\end{array}\right)
  \left(\begin{array}{c}n\\a\end{array}\right)(cx(t))^a,
  \quad
  \left(\begin{array}{c}n\\a\end{array}\right)
  =\frac{n!}{a_2!\cdots a_r!}
\]
are martingales, where the sum is taken over all ordered
positive integer partitions of $n$ satisfying
$a_2+\cdots+a_r=n$ with $a_i\ge 1$, $i\in\{2,\ldots,r\}$
and $c_2+\cdots+c_r=0$ with $c_i\in\mathbf{R}$,
$i\in\{2,\ldots,r\}$. This result is generalized for
a generic graph; an example is a reducible graph in which
vertices in an independent set can be identified
(a reduced graph is defined in Section~\ref{sect:intr}).

\begin{proposition}\label{prop:mar}
  Let $V_I\subsetneq V$ be an independent set of a graph
  $\mathcal{G}$ sharing an adjacent vertex. The homogeneous
  polynomials of any order $n\ge|V_I|+1$:
  \[
    \sum_{\sum_{i\in V_I} a_i=n,a\ge 1}
    \left(\begin{array}{c}n-|V_I|\\a-1\end{array}\right)
    \left(\begin{array}{c}n\\a\end{array}\right)(cx(t))^a,
    \quad
    \left(\begin{array}{c}n\\a\end{array}\right)
    =\frac{n!}{\prod_{i\in V_I}a_i!}
  \]
  is a martingale with respect to the natural filtration
  generated by the solution $(x(t),P_x)$ of the stochastic
  differential equation \eqref{sde}, where
  $\sum_{i\in V_I}c_i=0$ with $c_i\in\mathbf{R}$, $i\in V_I$.
\end{proposition}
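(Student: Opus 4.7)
The plan is to show $LM_n \equiv 0$ on $\Delta_{r-1}$ by a direct coefficient-by-coefficient calculation. Since $M_n$ is a polynomial, hence $C^2$ and bounded on the compact simplex, It\^o's formula applied to the solution of \eqref{sde} will then give
\[
  M_n(x(t)) = M_n(x(0)) + \int_0^t LM_n(x(s))\,ds + (\text{local martingale}),
\]
so the vanishing drift, together with boundedness, upgrades the local martingale to a true martingale. Write $s := |V_I|$.

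The first step is to simplify the action of $L$ on a monomial $x^a$ with $a$ supported in $V_I$. Two features of $V_I$ collapse most of the sum in \eqref{gen}: the independence of $V_I$ kills every off-diagonal $\sigma_{ij}$ with $i \neq j$ inside $V_I$, while the hypothesis that the vertices of $V_I$ share their adjacent vertex (interpreted, as in Section~\ref{sect:intr}, as having identical neighbourhoods, so that $V_I$ is identifiable to a single vertex) allows one to factor $\sigma_{ii}(x) = x_i\, Z$ with $Z := \sum_{k \in N(i)} x_k$ independent of $i \in V_I$. This yields
\[
  L x^a \;=\; \frac{Z}{2}\, \sum_{i \in V_I} a_i(a_i-1)\, x^{a - e_i}.
\]
Setting $w_a := \binom{n-s}{a - \mathbf{1}} \binom{n}{a}\, c^a$ and collecting the coefficient of each monomial $x^b$ of degree $n-1$ in $LM_n$ via the substitution $a = b + e_i$ gives
\[
  [x^b]\;\frac{2\,L M_n}{Z} \;=\; \sum_{i \in V_I}(b_i+1)\,b_i\,w_{b+e_i},
\]
where summands for which $a = b + e_i \not\ge \mathbf{1}$ are taken to be zero. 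If $b$ has two or more zero entries no $i$ contributes; if exactly one coordinate $b_{j_0}$ vanishes then only $i = j_0$ survives but $(b_{j_0}+1) b_{j_0} = 0$. The only nontrivial case is $b \ge \mathbf{1}$.

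In that case the crucial combinatorial observation is the identity
\[
  (b_i+1)\,b_i\,\binom{n-s}{b-\mathbf{1}+e_i}\,\binom{n}{b+e_i} \;=\; \frac{(n-s)!\, n!}{\prod_{j \in V_I}(b_j-1)!\, b_j!},
\]
whose right-hand side is symmetric in $i$; this follows by expanding the two multinomials explicitly and simplifying via $(b_i+1)\,b_i / \bigl(b_i!\,(b_i+1)!\bigr) = 1/\bigl((b_i-1)!\, b_i!\bigr)$. Pulling this $i$-independent quantity and the factor $c^b$ out of the sum leaves the scalar $\sum_{i \in V_I} c_i$, which vanishes by hypothesis. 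Hence $LM_n \equiv 0$ and the claimed martingale property follows. The main obstacle is spotting the $i$-invariance of the multinomial product above: both the factorisation $\sigma_{ii}(x) = x_i Z$ enabled by the shared-neighbourhood hypothesis and the precise coefficients in $w_a$ are needed for this cancellation; the remaining work (the It\^o reduction, the factorisation of $Z$, and the edge-case bookkeeping) is routine.
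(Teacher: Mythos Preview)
Your proof is correct and follows essentially the same route as the paper: compute the drift of $x^a$ for $a$ supported in $V_I$, factor out the common-neighbour contribution (your $Z$, the paper's $x_j$), and reduce the remaining sum to a constant times $\sum_{i\in V_I}c_i$ via the same multinomial identity---you extract the coefficient of each $x^b$, the paper reindexes $a\mapsto a-e_i$, but the underlying cancellation $a_i(a_i-1)f(n,a)=n(n-s)\,c_i f(n-1,a-e_i)$ is identical. Your explicit flag that the factorisation $\sigma_{ii}(x)=x_iZ$ requires the vertices of $V_I$ to have \emph{identical} neighbourhoods (not merely one common neighbour) is a useful clarification that the paper's proof glosses over.
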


\begin{proof}
  Applying It\^o's formula to monomials
  $x^a=\prod_{i\in V_I}x_i^{a_i}$, we have
  \[
    dx^a=\frac{1}{2}\sum_{i\in V_I}a_i(a_i-1)x^{a-e_i+e_j},
  \]
  where the vertex $j$ is adjacent to all vertices of
  $V_I$. Then,
  \begin{equation}
    d\sum_{\sum_{k\in V_I} a_k=n,a\ge 1}f(n,a)x^a
    =\frac{x_j}{2}\sum_{\sum_{k\in V_I} a_k=n,a\ge 1}
    \sum_{i\in V_I}a_i(a_i-1)x^{a-e_i}f(n,a),
    \label{prop:mar:eq1}
  \end{equation}
  where
  \[
    f(n,a)=
    \left(\begin{array}{c}n-|V_I|\\a-1\end{array}\right)
    \left(\begin{array}{c}n\\a\end{array}\right)c^a.
  \]
  The right hand side of the equation \eqref{prop:mar:eq1}
  is proportional to
  \[
    \sum_{i\in V_I}c_i\sum_{\sum_{k\in V_I}a_k=n,a\ge 1,a_i\ge 2}
    x^{a-e_i}f(n-1,a-e_i),
  \]
  and it vanishes because the second summation does not  
  depend on the index $i$ and $\sum_{i\in V_I}c_i=0$.
\end{proof}

Proposition~\ref{prop:mar} gives invariants among $n$-th
order moments of the marginal distribution of
the solution of the stochastic differential equation
\eqref{sde} at a given time. More precisely, such a moment
is represented as
\begin{equation}\label{moment}
  m_a(t):=\mathbf{E}_x\{(x(t))^a\}=T_tx^a, \quad
  |a|=a_1+\cdots+a_r=n, \quad a\in\mathbf{N}_0^V.
\end{equation}
Itoh et al.~\cite{IMS98} used the invariants to derive
the expression \eqref{p_S2} for masses on atoms in
the star graph $S_2$.

\begin{corollary}\label{coro:mar}
  Let $V_I\subsetneq V$ be an independent set of a graph
  $\mathcal{G}$ sharing an adjacent vertex. For moments of
  each order $n\ge|V_I|+1$, we have
  \begin{align*}
    &\sum_{\sum_{i\in V_I} a_i=n,a\ge 1}
    \left(\begin{array}{c}n-|V_I|\\a-1\end{array}\right)
    \left(\begin{array}{c}n\\a\end{array}\right)c^am_a(t)
    \nonumber\\
    &=\sum_{\sum_{i\in V_I} a_i=n,a\ge 1}
    \left(\begin{array}{c}n-|V_I|\\a-1\end{array}\right)
    \left(\begin{array}{c}n\\a\end{array}\right)(cx)^a,
    \quad \forall t>0,
  \end{align*}
  where $\sum_{i\in V_I}c_i=0$ with $c_i\in\mathbf{R}$,
  $i\in V_I$.
\end{corollary}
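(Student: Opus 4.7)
The plan is to deduce the corollary directly from Proposition~\ref{prop:mar} by taking expectations term by term. The polynomial
\[
  M(t)=\sum_{\sum_{i\in V_I}a_i=n,a\ge 1}
  \binom{n-|V_I|}{a-1}\binom{n}{a}(cx(t))^a
\]
is a martingale by Proposition~\ref{prop:mar}, and since $x(t)\in\Delta_{r-1}$ lies in a compact set, $M(t)$ is uniformly bounded in $t$. In particular $M(t)$ is uniformly integrable, so the martingale property gives $\mathbf{E}_x[M(t)]=M(0)$ for every $t\ge 0$.

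Next I would exchange expectation and the finite sum, using linearity, and factor out the combinatorial coefficients and the constants $c^a$, which do not depend on $x(t)$. The monomial $x(t)^a$ then becomes $\mathbf{E}_x\{(x(t))^a\}=m_a(t)$ by definition~\eqref{moment}, which produces the left-hand side of the claimed identity. On the right, $M(0)$ is evaluated at $x(0)=x$, which yields exactly the stated expression in $(cx)^a$.

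There is essentially no obstacle: the only points worth checking are that $M$ is in the domain where the martingale property applies (it is a polynomial, hence in $C_0^2(\Delta_{r-1})$ after suitable cutoff, and bounded on the simplex so no integrability issues arise), and that the sum is finite so the interchange of expectation and summation is automatic. Thus the corollary is a direct integrated form of Proposition~\ref{prop:mar}.
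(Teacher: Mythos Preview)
Your proposal is correct and matches the paper's intent: the corollary is stated in the paper without proof, as an immediate consequence of Proposition~\ref{prop:mar} obtained by taking expectations under $P_x$. One small remark: you do not actually need uniform integrability here, since for any martingale $\mathbf{E}_x[M(t)]=\mathbf{E}_x[M(0)]$ holds by definition; likewise the aside about $C_0^2$ cutoffs is unnecessary because Proposition~\ref{prop:mar} already furnishes the martingale property directly.
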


A small example is as follows.

\begin{example}
  Let $\mathcal{G}=C_4$, which is the cycle graph consisting
  of four vertices discussed in Example~\ref{exam:C}.
  A maximal independent set of $C_4$, $V_I=\{2,4\}$, shares
  $1$ or $3$ as the adjacent vertices. The totality of
  ordered positive integer partitions of four are
  $(a_2,a_4)=(1,3)$, $(2,2)$, and $(3,1)$, which correspond to
  the fourth order moments $m_{e_2+3e_4}(t)$, $m_{2e_2+2e_4}(t)$,
  and $m_{3e_2+e_4}(t)$, respectively. They constitute
  an invariant:
  \[
    m_{e_2+3e_4}(t)-3m_{2e_2+2e_4}(t)+m_{3e_2+e_4}(t)=
    x_2x_4(x_4^2-3x_2x_4+x_2^2), \quad \forall t\ge 0.
  \]
\end{example}

The existence of invariants among same order moments is
interesting, but we are also interested in computation
of each moments. They can be computed by simple algebra,
since each order moments satisfy a system of differential
equations:
\begin{equation}\label{mom}
  \frac{d}{dt}m_a(t)=
  \sum_{i\in V}\sum_{j\in N(i)}
  \frac{a_i(a_i-1)}{2}m_{a-e_i+e_j}-
  \sum_{i\in V}\sum_{j\in N(i)}\frac{a_ia_j}{2}m_a,
  m_a(0)=x^a
\end{equation}
for each $a\in\Pi_{n,r}^{\ge 0}$, where $\Pi_{n,r}^{\ge 0}$
is the totality of the ordered positive integer partitions
of an integer $n$ with $r$ positive integers:
\[
  \Pi_{n,r}^{\ge 0}:=\{a\in\mathbf{N}_0^r:a_1+\cdots+a_r=n\}.
\]
However, it is obvious that solving the system \eqref{mom}
becomes prohibitive as the cardinality of the set
$\Pi_{n,r}^{\ge 0}$ grows. Computation of moments via
stochastic simulation will be discussed in
Section~\ref{sect:appl}.

\section{Dual process on integer partitions}\label{sect:dual}

To study diffusions $(x(t),P_x)$ governed by the generator
\eqref{gen}, we employ a tool called duality, which is a familiar
tool in study of interacting particle systems (see, e.g.,
Chapter 2 of \cite{Liggett85}).

Consider a graph $\mathcal{G}=(V,E)$ and let
$(a(t),\mathbf{P}_a)$, $a(0)=a$ be a continuous-time
Markov chain on the set of ordered non-negative integer
partitions of $n$ with $r=|V|$, which is denoted by
$\Pi_{n,r}^{\ge 0}$, by the rate matrix $\{R_{a,b}\}$:
\begin{align}
  R_{a,a-e_i+e_j}&=\frac{a_i(a_i-1)}{2},
  &i \in V,~j\in N(i),\nonumber\\
  R_{a,b}&=0, \quad & \text{for~all~other}~b\neq a,
  \nonumber\\
  R_{a,a}&=-\sum_{b\neq a} R_{a,b},
  \label{rate}
\end{align}
where
\[
R_{a,b}=\lim_{t\downarrow 0}
\frac{\mathbf{P}_a(a(t)=b)-\delta_{a,b}}{t}, \quad
\forall a,b\in\Pi_{n,r}^{\ge 0}.
\]
The backward equation for the transition probability
$\mathbf{P}_a(a(t)=\cdot)$ is
\begin{equation}
  \frac{d}{dt}\mathbf{P}_a(a(t))
  =\sum_{i\in V}\sum_{j\in N(i)}
  \frac{a_i(a_i-1)}{2}\mathbf{P}_{a-e_i+e_j}(a(t))    
  +R_{a,a}\mathbf{P}_{a}(a(t)).
  \label{kbe}  
\end{equation}

We have the following duality relation between the Markov
semigroup $\{T_t\}$ and the Markov chain
$(a(t),\mathbf{P}_a)$.

\begin{lemma}\label{lemm:dual}
  The Markov semigroup $\{T_t\}$ associated with
  the generator \eqref{gen} and the Markov chain
  $(a(t),\mathbf{P}_a)$ with the rate matrix \eqref{rate}
  satisfy
  \begin{equation}
    T_t x^a=\mathbf{E}_x\{(x(t))^a\}=
    \mathbf{E}_a\left[x^{a(t)}
    \exp\left\{-\int_0^tk(a(s))ds\right\}\right],
    \quad \forall t\ge 0
    \label{dual}
  \end{equation}
  for each $a\in\Pi_{n,r}^{\ge 0}$, where the killing rate is
  \begin{equation}
    k(a)=\sum_{i\in V}\sum_{j\in N(i)}\frac{a_ia_j}{2}
    -\sum_{i\in V}\frac{d_ia_i(a_i-1)}{2}.
    \label{killing}
  \end{equation}
\end{lemma}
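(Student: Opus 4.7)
\medskip

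\noindent\textbf{Proof proposal.}
The plan is to establish the standard Markov duality by finding an intertwining relation between the generator $L$ of the diffusion (acting on $x$) and the generator $\tilde L$ of the chain $(a(t),\mathbf{P}_a)$ (acting on $a$) on the test function $F(x,a)=x^a$, with the discrepancy exactly the killing rate $k(a)$. Once this intertwining is in hand, the Feynman--Kac representation on the chain side and the Kolmogorov backward equation on the diffusion side will give the claimed equality.

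First, I would apply $L$ to the monomial $x^a$. The diagonal part $\sigma_{ii}(x)\partial_i^2 x^a = x_i\sum_{j\in N(i)}x_j\cdot a_i(a_i-1)x^{a-2e_i}$ produces the term $\tfrac12\sum_{i\in V}a_i(a_i-1)\sum_{j\in N(i)}x^{a-e_i+e_j}$, while the off-diagonal part, using $\sigma_{ij}(x)=-x_ix_j$ for $i\sim j$, contributes $-\tfrac12\sum_{i\in V}\sum_{j\in N(i)} a_ia_j\,x^a$. Next I would apply $\tilde L$, read off from the rate matrix \eqref{rate}, to the function $a\mapsto x^a$ (with $x$ held fixed); this gives the jump part $\sum_{i\in V}\sum_{j\in N(i)}\tfrac{a_i(a_i-1)}{2}x^{a-e_i+e_j}$ minus the total-rate term $\sum_{i\in V}\tfrac{d_i a_i(a_i-1)}{2}x^a$. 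Matching the jump pieces on the two sides and rearranging yields exactly
\begin{equation*}
  L_x x^a \;=\; \tilde L_a x^a \;-\; k(a)\,x^a,
\end{equation*}
with $k(a)$ as in \eqref{killing}. This identity is the heart of the lemma; verifying the bookkeeping of the two sums and the sign from the off-diagonal term is the step that requires the most care.

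Having the intertwining, I would close the argument by the usual interpolation trick. Fix $t>0$ and set
\begin{equation*}
  \Phi(s)\;:=\;\mathbf{E}_x\mathbf{E}_a\!\left[(x(t-s))^{a(s)}\exp\!\left\{-\int_0^s k(a(u))\,du\right\}\right],
  \qquad 0\le s\le t,
\end{equation*}
where the inner diffusion and chain are taken independent. Using \eqref{semi} to differentiate in the $x$-variable (which produces $-L_x$ from the time-reversed argument $t-s$) and the Feynman--Kac formula to differentiate in the $a$-variable (which produces $\tilde L_a - k(a)$), the intertwining gives $\Phi'(s)=0$. Comparing the endpoints $\Phi(0)=T_t x^a$ and $\Phi(t)=\mathbf{E}_a\!\left[x^{a(t)}\exp\{-\int_0^t k(a(u))du\}\right]$ yields \eqref{dual}.

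The main obstacle, beyond the clean but slightly delicate verification of the intertwining, is justifying the differentiation of $\Phi$; this is routine here because $x^a$ is a polynomial (bounded on $\Delta_{r-1}$) and the state space $\Pi_{n,r}^{\ge 0}$ of the chain is finite, so dominated convergence applies and no further regularity hypotheses are needed.
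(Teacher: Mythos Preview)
Your proposal is correct and follows essentially the same route as the paper: both compute $Lx^a$ and rewrite it as $\sum_b R_{a,b}x^b-k(a)x^a$, which is exactly your intertwining $L_xx^a=\tilde L_ax^a-k(a)x^a$. The only cosmetic difference is the closing step: the paper observes that $g(t,a):=T_tx^a$ satisfies the linear ODE system $\tfrac{d}{dt}g(t,a)=\sum_bR_{a,b}g(t,b)-k(a)g(t,a)$ and invokes the Feynman--Kac formula directly, whereas you use the equivalent interpolation argument $\Phi(s)$ with $\Phi'(s)=0$; both are standard and yield the same conclusion.
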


\begin{proof}
  Noting that
  \begin{align*}
    Lx^a
    &=\sum_{i\in V}\sum_{j\in N(i)}
      \frac{a_i(a_i-1)}{2}x^{a-e_i+e_j}-
      \sum_{i\in V}\sum_{j\in N(i)}\frac{a_ia_j}{2}x^a\\
    &=\sum_{i\in V}\sum_{j\in N(i)}
      \frac{a_i(a_i-1)}{2}(x^{a-e_i+e_j}-x^a)
      -\sum_{i\in V}\sum_{j\in N(i)}\frac{a_ia_j}{2}x^a
      +\sum_{i\in V}\frac{d_ia_i(a_i-1)}{2}x^a\\
    &=\sum_{b\in\Pi_{n,r}^{\ge 0}}R_{a,b}x^b-k(a)x^a
  \end{align*}
  and the operation \eqref{semi}, we see that
  $g(t,a)=T_tx^a$ satisfies the differential equation
  \[\frac{d}{dt}g(t,a)
     =\sum_{b\in I}R_{a,b}g(t,a)-k(a)g(t,a),\quad
    g(0,a)=x^a
  \]
  for each $a\in\Pi_{n,r}^{\ge 0}$. This is uniquely solved
  by means of the Feynman-Kac formula and the assertion
  follows.
\end{proof}

Since the total number of particles is kept, i.e.
$|a(t)|=a_1(t)+\cdots+a_r(t)=|a|$, $\forall t$,
the killing rate \eqref{killing} is bounded. The killing
rate is not positive definite, however, a key observation
is that if the support of a vector $a$ denoted by
${\rm supp}(a):=\{i\in V:a_i>0\}$ is an independent set
of $\mathcal{G}$, then the killing rate is non-positive:
$k(a)\le 0$. The converse is not always true.

To illustrate the Markov chain $(a(t),\mathbf{P}_a)$,
let us ask specific questions. What is the moment
of $2e_2+e_4$ for the cycle graph $C_4$ discussed
in Example~\ref{exam:C}? For a chain starts from
$2e_2+e_4$, there are two possible transitions;
the one is absorbed into the state $e_1+e_2+e_4$ and
the other is absorbed into the state $e_2+e_3+e_4$, where
the rates are unities (see Figure 1).
\begin{figure}
  \begin{center}
  \includegraphics[width=0.5\textwidth]{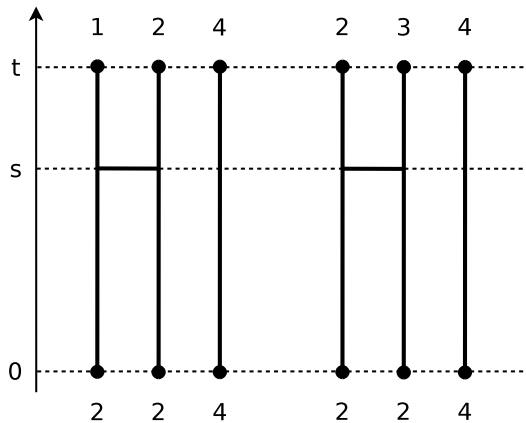}
  \caption{Possible transitions of the chain
    $(a(t),\mathbf{P}_a)$ on the cycle graph $C_4$
    starts from $a=2e_2+e_4$.} 
  \label{fig1}
  \end{center}
\end{figure}
The waiting time for the occurrence of either of
these two transitions follows
the exponential distribution of rate two. Since $k(a)=-2$ and
$k(e_1+e_2+e_4)=k(e_2+e_3+e_4)=2$, the right hand side of
the duality relation \eqref{dual} can be computed as
\begin{align*}
  &x_2^2x_4e^{2t}e^{-2t}+\frac{1}{2}(x_1x_2x_4+x_2x_3x_4)
  \int_0^t e^{2s-2(t-s)}2e^{-2s}ds\\
  &=x_2^2x_4+\frac{(x_1+x_3)x_2x_4}{2}(1-e^{-2t}),
  \quad t\ge 0,
\end{align*}
where $s$ is the time that one of the two possible transitions
occurs. The first term corresponds to the case that no
transition occurs until time $t$. Let us call a transition
event {\it collision}.

\begin{remark}\label{rema:coa}
  Analogous dual Markov chain of the diffusion approximation
  of a kind of Wright--Fisher model was effectively used by
  Shiga \cite{Shiga80a,Shiga80b}, where a transition
  $a\to a-e_i$ occurs with the same rate as in \eqref{rate}.
  Such a transition event is called ``coalescent''.
  In contrast to a collision, the total number of particles
  decreases by a coalescent.
\end{remark}

Here, we have a simple observation about invariants
among moments discussed in Section~\ref{sect:mom}. If
a chain  $(a(t),\mathbf{P}_a)$ starts from a state $a$
such that ${\rm supp}(a)$ is a maximal independent set
$V_I$ of the graph $\mathcal{G}$, then the killing rate
is non-positive and the duality relation \eqref{dual}
is reduced to
\[
  m_a(t)=x^a+
  \mathbf{E}_a\left[x^{a(t)}
  \exp\left\{-\int_0^tk(a(s))ds\right\};
  \text{collisions}~\text{occur}\right].
\]
Corollary~\ref{coro:mar} implies cancellation of the second
term among moments. Moreover, considering a case that
the diffusion $(x(t),P_x)$ starts from a point
$x$ in the face corresponding to $V_I$, by
Proposition~\ref{prop:start}, we know that after a collision
${\rm supp}(a(t))$ must contain a vertex which is not
contained in $V_I$.

Let us ask another question. What is the moment of
$a=2e_1+e_2+e_3$ for the star graph $S_2$ discussed in
Section~\ref{sect:intr}?  Some consideration reveals that
a chain $(a(t),\mathbf{P}_a)$ starts from $a$ never be
absorbed, and transitions occur among the three states:
$a$, $b=e_1+2e_2+e_3$, and $c=e_1+e_2+2e_3$.
Since $k(a)=2$ and $k(b)=k(c)=1$, the duality relation
\eqref{dual} gives
\begin{equation}\label{dual_q2}
  m_a(t)=e^{-t}\mathbf{E}_a
  \left[x^{a(t)}\exp\left\{-\int_0^t1(a(s)=a)ds\right\}\right],
\end{equation}
where $1(\mathcal{A})$ is 1 if the argument $\mathcal{A}$
is true and zero otherwise. Solving the backward equation
\eqref{kbe}, we immediately obtain
\begin{equation}
  \mathbf{P}_a(a(t)=a)=\frac{1}{3}+\frac{2}{3}e^{-3t},\quad
  \mathbf{P}_a(a(t)=b)=\mathbf{P}_a(a(t)=c)
  =\frac{1}{3}-\frac{1}{3}e^{-3t}.
  \label{fourth}
\end{equation}
However, computation of the right hand side of the equation
\eqref{dual_q2} seems not easy, because the expectation
depends on a sample path of the chain $(a(s):0\le s\le t)$.
Nevertheless, the moments can be obtained easily by
solving the system of differential equations \eqref{mom}.
In fact, we have
\[
  \left(
    \begin{array}{c}m_{a}(t)\\m_{b}(t)\\m_{c}(t)\end{array}
  \right)
  =\left(
    e^{-(3-\sqrt{3})t}(A+B)+e^{-2t}C+e^{-(3+\sqrt{3})t}(A-B)
  \right)
  \left(
    \begin{array}
      {c}x_1^2x_2x_3\\x_1x^2_2x_3\\x_1x_2x_3^2
    \end{array}
  \right),
\]
where
\[
  A=\left(\begin{array}{rrr}
  \frac{1}{2}&0&0\\  
  0&\frac{1}{4}&\frac{1}{4}\\  
  0&\frac{1}{4}&\frac{1}{4}\\  
  \end{array}\right), \quad
  B=\frac{\sqrt{3}}{6}
  \left(\begin{array}{rrr}
  -1&1&1\\  
  1&\frac{1}{2}&\frac{1}{2}\\  
  1&\frac{1}{2}&\frac{1}{2}\\  
  \end{array}\right), \quad 
  C=\left(\begin{array}{rrr}
  0&0&0\\  
  0&\frac{1}{2}&-\frac{1}{2}\\  
  0&-\frac{1}{2}&\frac{1}{2}\\
  \end{array}\right).
\]

The observations above lead to the following proposition on
the fate of the Markov chain $(a(t),\mathbf{P}_a)$.

\begin{proposition}\label{prop:abs}
  Consider the Markov chain $(a(t),\mathbf{P}_a)$ on the
  set of the ordered non-negative integer positive
  partitions $\Pi_{n,r}^{\ge 0}$.
  \begin{itemize}
  \item[$(i)$] If a chain starts from a state $a$ satisfying
    $1\le |a|\le r$, then it is absorbed into an element of
    $\{0,1\}^V$.
  \item[$(ii)$] If a chain starts from a state $a$ satisfying
    $n=|a|>r$, then the transition probability $\mathbf{P}_a(a(t)=\cdot)$
    converges to the uniform distribution on the set of
    ordered positive integer partitions
    \[
    \Pi_{n,r}:=\{b\in \mathbf{N}^r:b_1+\cdots+b_r=n\}
    \subsetneq \Pi_{n,r}^{\ge 0}.
    \]
  \end{itemize}  
\end{proposition}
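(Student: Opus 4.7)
The plan is to handle the two parts separately, with a common ``push-mass-along-a-graph-path'' construction as the engine.

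For part (i), I first observe that every $a \in \{0,1\}^V$ is absorbing since every rate $a_i(a_i-1)/2$ vanishes there. Because $\Pi^{\ge 0}_{|a|,r}$ is a finite state space, absorption in finite time almost surely follows once I show that the absorbing set is accessible from every non-absorbing state by a sequence of positive-rate transitions. Given $a \notin \{0,1\}^V$ with $|a| \le r$, choose $i$ with $a_i \ge 2$ (which exists because $a$ is non-absorbing) and $k$ with $a_k = 0$ (which exists because $|a| \le r$ while some $a_i \ge 2$). Fix a \emph{shortest} path $i = v_0 \sim v_1 \sim \cdots \sim v_m = k$ in the connected graph $\mathcal{G}$; by minimality $a_{v_\ell} \ge 1$ for each $\ell < m$. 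Execute the transitions $v_{\ell-1} \to v_\ell$ in order; the state just before the $\ell$-th step is $a - e_{v_0} + e_{v_{\ell-1}}$, which carries at least $2$ units of mass at $v_{\ell-1}$, so each push is admissible. The net effect is $a \mapsto a - e_{v_0} + e_{v_m}$, which strictly enlarges $\mathrm{supp}(a)$ by $\{v_m\}$ (while $v_0$ remains in the support because $a_{v_0} \ge 2$). Iterating until $|\mathrm{supp}(a)| = |a|$ lands the chain in $\{0,1\}^V$.

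For part (ii), the same geometry applies with $\Pi_{n,r}$ replacing $\{0,1\}^V$. I would verify in turn: (a) $\Pi_{n,r}$ is closed under the dynamics, since a transition from $a \in \Pi_{n,r}$ yields $a'$ with $a'_i \ge 1$, $a'_j \ge 2$ and other entries unchanged; (b) the push construction of part (i) carries the chain into $\Pi_{n,r}$ from any $a$ with $|a| > r$, because each push enlarges $\mathrm{supp}(a)$ and $\mathrm{supp}(a) = V$ is equivalent to $a \in \Pi_{n,r}$ when $|a| > r$; and (c) the restricted chain on $\Pi_{n,r}$ is irreducible---given $a \ne b$ in $\Pi_{n,r}$, pick $i$ with $a_i > b_i$ (so $a_i \ge 2$) and $k$ with $a_k < b_k$, transport one unit of mass from $i$ to $k$ along a graph path by the same push sequence (intermediate coordinates remain $\ge 1$ because we stay in $\Pi_{n,r}$ throughout), and note that this strictly reduces $\sum_\ell (a_\ell - b_\ell)^+$. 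Standard ergodic theory for finite irreducible continuous-time Markov chains then yields existence and uniqueness of a stationary distribution $\pi$ on $\Pi_{n,r}$ together with the convergence $\mathbf{P}_a(a(t)=\cdot) \to \pi$.

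The remaining step, identifying $\pi$ with the uniform distribution on $\Pi_{n,r}$, I expect to be the main obstacle. My first attempt would be to check the global balance equation directly: for each $b \in \Pi_{n,r}$, the total out-rate from $b$ equals $\sum_{i \in V} d_i \binom{b_i}{2}$, while the total in-rate to $b$ from preimages $a = b + e_i - e_j \in \Pi_{n,r}$ (requiring $i \sim j$ and $b_j \ge 2$ so that $a_j \ge 1$) equals $\sum_{i \sim j,\, b_j \ge 2} \binom{b_i + 1}{2}$. I would try to match the two sums by reorganizing each as a sum over ordered edges of $\mathcal{G}$ and using the identity $\binom{b_i + 1}{2} - \binom{b_i}{2} = b_i$ to pair contributions along $(i,j)$ with those along $(j,i)$; the restriction $b_j \ge 2$ on the in-side is the delicate point and has to be absorbed by the corresponding out-terms. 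If this direct accounting falls short, I would fall back on establishing detailed balance after a suitable reweighting of states, or exploit the moment invariants of Proposition~\ref{prop:mar} through the duality \eqref{dual} to constrain $\pi$.
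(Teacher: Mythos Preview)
Your treatment of (i) and of the accessibility and irreducibility in (ii) matches the paper's proof, which phrases the same idea through the non-increasing process $z(t)=|\{i:a_i(t)=0\}|$ and a path argument equivalent to your push construction. One small repair: fixing $k$ first and then taking a shortest $i$--$k$ path does not guarantee $a_{v_\ell}\ge 1$ for $\ell<m$ (e.g.\ on the path graph $1\sim 2\sim 3\sim 4$ with $a=(3,0,0,1)$, $i=1$, $k=3$); take instead a shortest path from $i$ to the \emph{set} $\{\ell:a_\ell=0\}$.

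Your suspicion about the final step is well founded: the global balance you set up does \emph{not} hold, and the stationary distribution is not uniform once $n>r+1$. The paper only verifies uniformity for $n=r+1$, where the restricted rate matrix on $\Pi_{r+1,r}$ is exactly the negative graph Laplacian (whose kernel contains the constant vector), and then asserts that $n>r+1$ is ``similar''. But already on the path graph $1\sim 2\sim 3$ with $n=5$, the state $b=(1,3,1)$ has out-rate $d_2\binom{3}{2}=6$ while the only in-transitions within $\Pi_{5,3}$ come from $(2,2,1)$ and $(1,2,2)$ with rate $\binom{2}{2}=1$ each, total $2$. Solving directly gives the stationary law $\pi_{(1,1,3)}=\pi_{(3,1,1)}=\pi_{(1,3,1)}=\tfrac{1}{12}$ and $\pi_{(1,2,2)}=\pi_{(2,2,1)}=\pi_{(2,1,2)}=\tfrac{1}{4}$. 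So the constraint $b_j\ge 2$ on the in-side that worried you is exactly where the accounting breaks, and the uniform claim in (ii) is incorrect beyond $n=r+1$; neither your proposed fallbacks nor the paper's ``similar manner'' can repair it.
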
  

\begin{proof}
  $(i)$ A state $a$ is absorbing if and only if the row
  vector of the rate matrix \eqref{rate} is zero, which
  implies $a\in\{0,1\}^V$. Consider the set of
  vertices $V_0(a(t))=\{i\in V:a_i(t)=0\}$. Then,
  $z(t):=|V_0(a(t))|$, $t\ge 0$ is a death process and is
  absorbed into the state $r-n$ at a Markov time
  $\tau_0<\infty$ with respect to the Markov chain
  $(a(t),\mathbf{P}_a)$,
  where $a_j(\tau_0)=1$ for $j\in V\setminus V_0(a(\tau_0))$.
  Let us show that the process eventually decreases if
  $z(t)>r-n$. If $z(t)>r-n$, at least one vertex, say
  $j\in V\setminus V_0(a(t))$, satisfies $a_j(t)\ge 2$.
  If the vertex $j$ is connected with a vertex in
  $V_0(a(t))$, say $k$, the transition
  $a(t)\to a(t_+)=a(t)-e_j+e_k$ occurs
  with positive probability and it makes $z(t_+)=z(t)-1$.
  Otherwise, the vertex $j$ should be connected with
  at least one vertex in $V\setminus V_0(a(t))$, say $l$.
  The transition $a(t)\to a(t_+)=a(t)-e_j+e_l$ makes
  $a_l(t_+)\ge 2$. If the vertex $l$ is connected with
  a vertex in $V_0(a(t))$, the assertion is shown. Otherwise,
  the vertex $l$ should be connected with at least one
  vertex in $V\setminus \{V_0(a(t))\cup j\}$. The proof is
  completed by repeating this procedure until we reach
  a vertex in $V\setminus V_0(a(t))$ which is connected
  with a vertex in $V_0(a(t))$.\\
  $(ii)$ If a chain starts from a state in the set
  $\Pi_{n,r}^{\ge 0}\setminus \Pi_{n,r}$, the argument
  for $(i)$, i.e., $z(t)$ is a death process,
  shows that in a finite time the chain reaches
  a state, say $a$, in the set $\Pi_{n,r}$ and never
  exits from $\Pi_{n,r}$. For such a case, consider
  restarting the chain from the state $a$.
  For simplicity, we consider the case of $n=r+1$.
  Then, a state $a(t)$ can be identified with
  the unique vertex $i\in V$ satisfying $a_i(t)=2$.
  Since we are considering a connected graph, there exist
  a path $i=v_0, v_1, v_2, \ldots, v_s=j$
  for any $i\neq j\in V$ with some $s\in\mathbf{N}$, where
  $v_{k-1}\sim v_k$, $k=1,\ldots,s$. Since all of
  the transitions occur with rate unity, the sample path has
  a positive probability. Hence, the Markov chain is
  irreducible and ergodic, and there exists the unique
  stationary distribution on the set $\Pi_{n,r}$. Since
  the uniform distribution on $\Pi_{n,r}$ satisfies
  the backward equation \eqref{kbe}, it is the stationary
  distribution. Cases of $n>r+1$ can be shown in
  a similar manner by showing that up to $n-r$ particles
  can be moved from a vertex to another vertex.
\end{proof}  

Proposition~\ref{prop:abs} shows that if the Markov chain
$(a(t),\mathbf{P}_a)$ starts from a state $a$ satisfying
$|a|>|V|$, convergence of the chain can be divided to
two phases:
(1) exits from the set $\Pi_{n,r}^{\ge 0}\setminus\Pi_{n,r}$,
and (2) converges to the uniform distribution on
the set $\Pi_{n,r}$. The largest eigenvalue of the rate
matrix is zero. To consider the mixing, we have to know
the second largest eigenvalue, say $\lambda_2$.
By the spectral decomposition of the transition probability,
the mixing time of the phase 2, or the infimum of $t$ satisfying
\[
  \max_{a\in\Pi_{n,r}}
  \|\mathbf{P}_a(a(t)=\cdot)-|\Pi_{n,r}|^{-1}\|_{\rm TV}<\epsilon
\]
for $\epsilon>0$ is less than $c\lambda_2^{-1}\log \epsilon^{-1}$
for some constant $c>0$, where $\|\mu-\nu\|_{\rm TV}$ is
the total variation distance between probability measures $\mu$
and $\nu$.

\begin{example}
  Consider the case that $n=r+1$ (see the proof of
  Proposition~\ref{prop:abs} ($ii$)). It can be shown that
  the rate matrix $\{R_{a,b}\}$ reduces to the negative of
  the Laplacian matrix of $\mathcal{G}$, whose second
  largest eigenvalue is called algebraic connectivity of
  $\mathcal{G}$. For a connected graph, it is known that
  the largest eigenvalue is zero and the second largest
  eigenvalue is bounded below by 
  $4/(r{\rm diam}(\mathcal{G}))$ \cite{Mohar91},
  where ${\rm diam}(\mathcal{G})$ is the diameter of
  $\mathcal{G}$, or the maximum of the shortest path lengths
  between any pair of vertices in $\mathcal{G}$.
  For the star graph $S_2$ and the forth order monomials
  discussed above, the rate matrix is the negative of
  the Laplacian matrix:
  \[
    R=-\left(\begin{array}{rrr}
    2 & -1& -1\\
    -1&  1&  0\\
    -1&  0&  1
    \end{array}\right),
  \]
  and the eigenvalues are $0$, $-1(<-2/3)$, and $-3$,
  where $r=3$, ${\rm diam}(S_2)=2$. The two eigenvalues
  appear in the transition probabilities \eqref{fourth}.
  For a generic graph with large $r$, the mixing time is
  $O(r{\rm diam}(\mathcal{G}))\log \epsilon^{-1}$.
\end{example}
  
Assessment of the phase 1 seems harder. The death process
$z(t)=|V_0(a(t))|$, $t>0$ with $z(0)\le r-1$ is not
a Markov process. The death rate is bounded below:
\[
  \sum_{i\in V}\sum_{j\in N(i)\cap V_0(a(t))}\frac{a_i(t)(a_i(t)-1)}{2}\ge
  \frac{a_{k}(t)(a_{k}(t)-1)}{2} ~\text{for~some}~
  k\in V\setminus V_0(a(t)).
\]
To obtain a rough estimate of the right hand side, let us
suppose the state $a(t)$ follows the uniform distribution
on the set of ordered positive integer partitions
$\Pi_{n,r-z}$, where
\[
  \mathbf{P}(a_k=i)=
  \left(\begin{array}{c}n-i\\r-z-2\end{array}\right)
  \left(\begin{array}{c}n-1\\r-z-1\end{array}\right)^{-1},
  \quad i\in\{1,\ldots,n-r+z+1\}.
\]
Then, the expectation of the lower bound is
$(n-r+z)_{r-z+2}/(r-z)_3$, where $(n)_i=n(n+1)\cdots(n+i-1)$.
When $n$ is large, the dominant contribution to the expectation
of the waiting time for the exit comes from the period
$\{t:z(t)=z(0)\}$, and it is $O(n^{z(0)-r-2})$.
Hence, the expectation of the waiting time for the exit
would be $O(n^{-3})$.

Shiga \cite{Shiga80a,Shiga80b} and Shiga and Uchiyama
\cite{SU86} studied structures of extremal stationary
states of the diffusion approximation of a kind of
Wright--Fisher model in $[0,1]^S$ for a countable set
$S$ by using its dual Markov chain. Extremal states of
the adjoint Markov semigroup $\{T^*_t\}$ on
$\mathcal{P}(\Delta_{r-1})$ induced by $\{T_t\}$
associated with the diffusion $(x(t),P_x)$ can be
studied by using the dual Markov chain
$(a(t),\mathbf{P}_a)$. Note that positivity of
a stationary state is not assumed, as explained in
Section~\ref{sect:intr}.

\begin{theorem}\label{theo:stat}
  The support of an extremal stationary state of
  the adjoint Markov semigroup $\{T^*_t\}$ is a face of
  the simplex $\Delta_{r-1}$ corresponding to
  an independent set $V_I$ of the graph $\mathcal{G}$,
  namely, ${\rm int}({\rm conv}\{e_i:i\in V_I\})$.
\end{theorem}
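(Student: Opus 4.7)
The plan is to show first that every stationary state is concentrated on the union of faces corresponding to independent sets, and then to invoke extremality to isolate a single such face. The first step is the crux and uses the duality relation of Lemma~\ref{lemm:dual} in an economical way.

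For the first step, I would apply the duality formula with $a=e_i+e_j$ for each edge $i\sim j$ of $\mathcal{G}$. The state $e_i+e_j$ is an absorbing state of the dual chain $(a(t),\mathbf{P}_a)$, because every transition rate $a_k(a_k-1)/2$ vanishes when $a_k\in\{0,1\}$; nevertheless, its killing rate is $k(e_i+e_j)=a_ia_j=1>0$. Writing $m_a^\nu:=\int x^a\,\nu(dx)$ and using stationarity of $\nu$ together with Fubini's theorem, the duality formula collapses to
\[
  m_{e_i+e_j}^\nu=\mathbf{E}_{e_i+e_j}\!\left[m_{a(t)}^\nu\exp\left(-\int_0^tk(a(s))\,ds\right)\right]=e^{-t}\,m_{e_i+e_j}^\nu
\]
for every $t\ge 0$, which forces $m_{e_i+e_j}^\nu=\int x_ix_j\,\nu(dx)=0$. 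Since $x_ix_j\ge 0$ and this identity holds for every edge, $\nu$ is concentrated on the set $\{x\in\Delta_{r-1}:{\rm supp}(x)\text{ is an independent set of }\mathcal{G}\}$, which is the disjoint union $\bigsqcup_{V_I\text{ indep}}{\rm int}({\rm conv}\{e_i:i\in V_I\})$ over non-empty independent sets $V_I$.

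For the second step, Proposition~\ref{prop:start} asserts that every point of a face corresponding to an independent set is a fixed point of the diffusion, so $T_tf(x)=f(x)$ there for every continuous $f$. Consequently, the restriction $\nu_{V_I}:=\nu|_{{\rm int}({\rm conv}\{e_i:i\in V_I\})}$ is itself invariant under $T_t^*$ for each independent $V_I$. Assembling and normalizing, $\nu$ is represented as a convex combination of stationary probability measures, each supported on a single face corresponding to an independent set; extremality of $\nu$ then forces all but one coefficient to vanish, which is precisely the assertion of the theorem.

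The main delicate point I anticipate is the bookkeeping between the measure-theoretic support and the ``sloppy'' notion of support adopted in the paper, together with the need to verify that the decomposition in the first step exhausts $\nu$. The latter is not an obstacle in the end, because any mass that $\nu$ might place on the boundary of an open face ${\rm int}({\rm conv}\{e_i:i\in V_I\})$ would lie on a face corresponding to a proper subset of $V_I$, and proper subsets of independent sets remain independent. Once this bookkeeping is in place, the fixed-point property from Proposition~\ref{prop:start} makes the extremality argument in the second step essentially automatic.
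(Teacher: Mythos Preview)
Your proof is correct and follows the same core idea as the paper --- applying the duality relation of Lemma~\ref{lemm:dual} at absorbing states $a\in\{0,1\}^V$ of the dual chain to force $\langle\nu,x^a\rangle=0$ whenever ${\rm supp}(a)$ is not independent --- but your execution differs in two respects worth noting. First, you test only against $a=e_i+e_j$ for each edge $i\sim j$, whereas the paper tests against $a=\sum_{i\in U}e_i$ for every non-independent $U\subset V$ and proceeds by a top-down vertex-removal induction starting from $U=V$; your choice is more economical, since the conditions $\int x_ix_j\,\nu(dx)=0$ over all edges already confine $\nu$ to the union of independent faces without the inductive bookkeeping. Second, you make the extremality step explicit by invoking Proposition~\ref{prop:start} to see that each independent face consists of fixed points, so the restrictions $\nu|_{{\rm int}({\rm conv}\{e_i:i\in V_I\})}$ are themselves stationary and $\nu$ decomposes as a convex combination over independent $V_I$; the paper's proof is terser on this last point, stopping essentially at the exclusion statement. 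Both arguments are sound, but yours is tighter and makes the role of extremality more transparent.
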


\begin{proof}
  Consider a Markov chain $(a(t),\mathbf{P}_a)$ with
  rate matrix \eqref{rate} starts from a state
  $a(0)=a\in\{0,1\}^V$. According to
  Proposition~\ref{prop:abs} $(i)$, such $a$ is an
  absorbing state and the chain stays there.
  Lemma~\ref{lemm:dual} gives
  \begin{equation*}
    T_tx^a=x^a
    \exp\left(
    -t\sum_{i\in V}\sum_{j\in N(i)}\frac{a_ia_j}{2}
    \right), \quad \forall t\ge 0.
  \end{equation*}
  A stationary state
  $\nu\in\mathcal{P}(\Delta_{r-1})$ satisfies
  $\langle\nu,T_t x^a\rangle=\langle T^*_t\nu,x^a\rangle=\langle\nu,x^a\rangle$
  for any $x^a$. If $a\in\{0,1\}^V$, this condition
  reduces to
  \begin{equation*}\label{theo:stat:eq1}  
    \langle\nu,x^a\rangle
    \left\{1-\exp\left(
    -t\sum_{i\in V}\sum_{j\in N(i)}\frac{a_ia_j}{2}
    \right)\right\}=0, \quad \forall t\ge 0.
  \end{equation*}
  Therefore, if ${\rm supp}(a)$ is not an independent
  set, then $\langle\nu,x^a\rangle=0$. Since we
  are considering a connected graph $\mathcal{G}$,
  $V$ is not an independent set of $\mathcal{G}$.
  The condition reduces to
  $\langle\nu,x_1\cdots x_r\rangle=0$. Since
  \[
    \{x\in\Delta_{r-1}: x_1\cdots x_r>0\}=
    {\rm int}(\Delta_{r-1})=
    {\rm int}({\rm conv}\{e_i: i\in V\}),
  \]
  the condition $\langle\nu,x_1\cdots x_r\rangle=0$ excludes
  ${\rm int}({\rm conv}\{e_i: i\in V\})$ from the support
  of $\nu$. Suppose there exists a vertex $j_1$ such that
  $V^{(1)}=V\backslash\{j_1\}$ is still not an independent
  set. Set $a$ such that $a_i=1$ if $i\in V^{(1)}$ and
  $a_i=0$ otherwise for each $i\in\{1,\ldots,r\}$.
  Since
  \[
    \{x\in\Delta_{r-1}: x^a>0\}=
    {\rm int}({\rm conv}\{e_i: i\in V^{(1)}\}),
  \]
  the condition $\langle\nu,x^a\rangle=0$ excludes
  ${\rm int}({\rm conv}\{e_i: i\in V^{(1)}\})$ from
  the support of $\nu$. Repeating of this procedure
  yields an independent set
  $V_I=V\setminus \{j_1,\ldots,j_s\}$ for
  some $s\in\mathbf{N}$, and the face
  ${\rm int}({\rm conv}\{e_i: i\in V_I\})$
  is not excluded from the support of $\nu$.
\end{proof}

The steps in the above proof appear in the following example.

\begin{example}\label{exam:C2}
  Let $\mathcal{G}=C_4$, which is the cycle graph consisting
  of four vertices. The support of an extremal stationary
  state appeared in Example~\ref{exam:C} is confirmed as follows.
  Remove the vertex $j_1=2$ from the vertex set $V$. Since
  $V^{(1)}=V\setminus\{2\}=\{1,3,4\}$ is not
  an independent set, the face
  ${\rm int}({\rm conv}\{e_1,e_3,e_4\})$ is excluded
  from a support of extremal stationary states.
  Then, remove $j_2=4$ from $V^{(1)}$. Since
  $V^{(2)}=V\setminus\{2,4\}=\{1,3\}$ is an
  independent set, the face ${\rm int}({\rm conv}\{e_1,e_3\})$
  is the support of an extremal stationary state.
\end{example}

A direct consequence of Theorem~\ref{theo:stat} on
the moments is as follows.

\begin{corollary}\label{coro:mom}
  For each $n\in\mathbf{N}$, if the limit of an $n$-th
  order moments of the diffusion $(x(t),P_x)$ on
  a graph $\mathcal{G}$ is positive, namely,
  \[
    \lim_{t\to\infty}m_a(t)
    =\lim_{t\to\infty}\mathbf{E}_x\{(x(t))^a\}>0,
    \quad \text{for} ~~ a ~~ \text{satisfying} ~~ |a|=n,
  \]
  then ${\rm supp}(a)$ is an independent set of
  $\mathcal{G}$.
\end{corollary}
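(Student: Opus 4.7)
The plan is to combine Theorem~\ref{theo:stat} with a simple weak-limit argument on the laws $\mu_t := T_t^*\delta_x$. Since $\Delta_{r-1}$ is compact, the family $\{\mu_t\}_{t\ge 0}$ is automatically tight, so along some sequence $t_k\to\infty$ it converges weakly to a limit $\mu_\infty\in\mathcal{P}(\Delta_{r-1})$. A standard argument shows that any such weak limit is $T_t^*$-invariant: for every $f\in C^2_0(\Delta_{r-1})$ and every $t\ge 0$, using the semigroup property and \eqref{semi}, $\langle\mu_\infty,T_tf\rangle-\langle\mu_\infty,f\rangle$ is obtained as the limit of $\langle\mu_{t_k+t},f\rangle-\langle\mu_{t_k},f\rangle$, and both terms are pushed to the same value along the subsequence; hence $\mu_\infty\in\mathcal{S}$.

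Next, since $x\mapsto x^a$ is continuous on the compact simplex, weak convergence gives $\langle\mu_\infty,x^a\rangle = \lim_{k\to\infty} m_a(t_k)$, and under the hypothesis that the full limit $\lim_{t\to\infty}m_a(t)$ exists and is positive, we obtain $\langle\mu_\infty,x^a\rangle>0$. Decompose $\mu_\infty$ into extremal stationary states, $\mu_\infty=\sum_j p_j\nu_j$ with $\nu_j\in\mathcal{S}_{\rm ext}$, so that at least one term contributes positively: $\langle\nu_j,x^a\rangle>0$ for some $j$.

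By Theorem~\ref{theo:stat}, that extremal state $\nu_j$ is supported on a face ${\rm int}({\rm conv}\{e_i:i\in V_I^{(j)}\})$ for some independent set $V_I^{(j)}\subseteq V$. Suppose toward contradiction that ${\rm supp}(a)$ is not an independent set; then ${\rm supp}(a)$ cannot be contained in $V_I^{(j)}$ (any subset of an independent set is independent), so there exists $k\in{\rm supp}(a)\setminus V_I^{(j)}$. On the support of $\nu_j$ the coordinate $x_k$ vanishes identically, whence $x^a=\prod_{i}x_i^{a_i}\equiv 0$ on ${\rm supp}(\nu_j)$. Therefore $\langle\nu_j,x^a\rangle=0$, contradicting positivity. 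Hence ${\rm supp}(a)$ must be an independent set.

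The only mildly delicate point in this plan is the verification that subsequential weak limits of $\mu_t$ are genuinely in $\mathcal{S}$; this is routine once one invokes the strong continuity of $\{T_t\}$ on $C(\Delta_{r-1})$ that was assumed in Section~\ref{sect:intr}, together with compactness of the simplex. Everything else reduces to the coordinate-vanishing observation that $x^a=0$ on every face corresponding to an independent set not containing ${\rm supp}(a)$, which is immediate from the definition of the face.
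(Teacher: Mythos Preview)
Your argument has a genuine gap at the step where you claim that any subsequential weak limit $\mu_\infty$ of $\mu_t=T_t^*\delta_x$ lies in $\mathcal{S}$. From $\mu_{t_k}\Rightarrow\mu_\infty$ and the Feller property you correctly get
$\langle\mu_\infty,T_tf\rangle=\lim_k\langle\mu_{t_k},T_tf\rangle=\lim_k\langle\mu_{t_k+t},f\rangle$,
but nothing guarantees that the shifted sequence $\mu_{t_k+t}$ converges to the \emph{same} limit $\mu_\infty$; strong continuity of $\{T_t\}$ does not help here. (For a deterministic rotation on the circle, $\mu_t=\delta_{e^{it}}$ has subsequential limits $\delta_1$ along $t_k=2\pi k$, which are not invariant.) The standard repair is to pass to time averages $\bar\mu_T=T^{-1}\int_0^T\mu_t\,dt$: their subsequential weak limits are always stationary, and since $m_a(t)\to L$ implies $\langle\bar\mu_T,x^a\rangle\to L$, the rest of your argument goes through unchanged.

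That said, the paper treats the corollary as an immediate consequence and the direct route implicit in the proof of Theorem~\ref{theo:stat} avoids weak limits altogether. Set $a'\in\{0,1\}^V$ with $a'_i=1$ iff $i\in{\rm supp}(a)$. Since $0\le x_i\le 1$ and $a_i\ge a'_i$, one has $x^a\le x^{a'}$ pointwise on $\Delta_{r-1}$, hence $m_a(t)\le T_t x^{a'}$. But $a'$ is an absorbing state of the dual chain, so Lemma~\ref{lemm:dual} gives
\[
T_t x^{a'}=x^{a'}\exp\Bigl(-t\sum_{i\in V}\sum_{j\in N(i)}\tfrac{a'_ia'_j}{2}\Bigr)=x^{a'}e^{-t\,|E_{{\rm supp}(a)}|},
\]
which is exactly the formula used in the proof of Theorem~\ref{theo:stat}. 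If ${\rm supp}(a)$ is not independent then $|E_{{\rm supp}(a)}|\ge 1$ and $m_a(t)\to 0$. This is shorter and yields the quantitative decay rate as a bonus; your approach, once patched with Ces\`aro averages, is correct but more circuitous.
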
  
  
\section{Diffusion with a linear drift}\label{sect:drift}

In this section we consider the diffusion
$(\tilde{x}(t),\tilde{P}_x)$ taking value in probability
measures on a graph $\mathcal{G}=(V,E)$, $r=|V|$
satisfying the following stochastic differential
equation with a linear drift:
\begin{equation}\label{sde_k}
  dx_i=\sum_{j \in N(i)}\sqrt{x_ix_j}dB_{ij}
  +\frac{\alpha}{2}(1-rx_i)dt, \quad i\in V
\end{equation}
for $\alpha\in\mathbf{R}_{>0}$. The drift term,
$\alpha(1-rx_i)dt/2$, gives a killing of the dual process
with a linear rate. As shown below, behaviors of
the diffusion and the dual Markov chain are significantly
different from those without drift discussed in previous
sections.

In Itoh et al.'s discrete stochastic model described in
Section~\ref{sect:intr}, this drift corresponds to
adding the following dynamics: at each instant of time,
one of $N$ particles is chosen at uniformly random and
assigned to another vertex chosen at uniformly random with
rate $\alpha(r-1)/(N-1)$. In the Wright--Fisher model,
this drift corresponds to a mutation mechanism
\cite{EK86}.

Let $(\tilde{a}(t),\tilde{\mathbf{P}}_a)$ be
a continuous-time Markov chain on a finite integer lattice,
or the set of non-negative integers
\[
  I:=\{a\in \mathbf{N}^V_0:a_1+\cdots+a_r<\infty\}
\]
by the rate matrix $\{\tilde{R}_{a,b}\}$:
\begin{align} \label{rate_k}
  \tilde{R}_{a,a-e_i+e_j}&=\frac{a_i(a_i-1)}{2},
  &i \in V,~j\in N(i), \nonumber\\
  \tilde{R}_{a,a-e_i}&=\frac{\alpha}{2}a_i,
  &i \in V, \nonumber\\
  \tilde{R}_{a,b}&=0, \quad & \text{for~all~other}~b
  \neq a, \nonumber\\
  \tilde{R}_{a,a}&=-\sum_{b\neq a}\tilde{R}_{a,b},
\end{align}
where
\[
  \tilde{R}_{a,b}=\lim_{t\downarrow 0}
  \frac{\tilde{\mathbf{P}}_a(\tilde{a}(t)=b)
  -\delta_{a,b}}{t}, \quad
  \forall a,b\in I
\]
The backward equation for the transition probability
$\tilde{\mathbf{P}}_a(\tilde{a}(t)=\cdot)$ is
\begin{align*}
  \frac{d}{dt}\tilde{\mathbf{P}}_a(\tilde{a}(t))
  =&\sum_{i\in V}\sum_{j\in N(i)}
  \frac{a_i(a_i-1)}{2}
  \tilde{\mathbf{P}}_{a-e_i+e_j}(\tilde{a}(t))
  +\frac{\alpha}{2}\sum_{i\in V}
  \tilde{\mathbf{P}}_{a-e_i}(\tilde{a}(t))
  \nonumber\\
  &-\tilde{R}_{a,a}\tilde{\mathbf{P}}_{a}(\tilde{a}(t)).
  \label{kbe_k}  
\end{align*}

The following duality relation between the Markov
semigroup associated with the diffusion
$(\tilde{x}(t),\tilde{P}_x)$ denoted by
$\{\tilde{T}_t:t\ge 0\}$, and the Markov chain
$(\tilde{a}(t),\tilde{\mathbf{P}}_a)$ can be shown in
the same manner as Lemma~\ref{lemm:dual}.

\begin{lemma}\label{lemm:dual_k}
  The Markov semigroup $\{\tilde{T}_t\}$ and
  the Markov chain $(\tilde{a}(t),\tilde{\mathbf{P}}_a)$
  with the rate matrix \eqref{rate_k} satisfy
  \begin{equation}
    \tilde{T}_tx^a=\tilde{\mathbf{E}}_x(x^a(t))=
    \tilde{\mathbf{E}}_a\left[x^{\tilde{a}(t)}
    \exp\left\{-\int_0^t\tilde{k}(\tilde{a}(s))ds\right\}
    \right], \quad t\ge 0
    \label{dual_k}
  \end{equation}
  for each $a\in I$, where the killing rate is
  \begin{equation}
    \tilde{k}(a)=\sum_{i\in V}\sum_{j\in N(i)}\frac{a_ia_j}{2}
    +\frac{\alpha}{2}(r-1)|a|
    -\sum_{i\in V}\frac{d_ia_i(a_i-1)}{2}.
    \label{killing_k}
  \end{equation}
\end{lemma}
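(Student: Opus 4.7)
The plan is to mirror the proof of Lemma~\ref{lemm:dual} essentially verbatim, with the only real work being the bookkeeping that identifies the extra contributions coming from the drift term with the new death transition $a\to a-e_i$ of the chain, together with a correction to the killing rate. The strategy will be: (1) apply the full generator $\tilde L=L+\frac{\alpha}{2}\sum_i(1-rx_i)\partial_i$ to the monomial $x^a$, (2) rewrite the result as $\sum_{b}\tilde R_{a,b}x^b-\tilde k(a)x^a$ with $\tilde R$ given by \eqref{rate_k}, (3) read off the differential equation satisfied by $g(t,a)=\tilde T_t x^a$, and (4) invoke the Feynman--Kac formula on the Markov chain $(\tilde a(t),\tilde{\mathbf P}_a)$ to recover the integral representation \eqref{dual_k}.

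The computation in step (1) splits naturally. The second-order part $Lx^a$ is already handled in Lemma~\ref{lemm:dual} and equals $\sum_b R_{a,b}x^b-k(a)x^a$ with the old rates and killing rate. For the drift, using $\partial_i x^a=a_i x^{a-e_i}$, one has
\[
  \frac{\alpha}{2}\sum_{i\in V}(1-rx_i)a_ix^{a-e_i}
  =\frac{\alpha}{2}\sum_{i\in V}a_ix^{a-e_i}-\frac{\alpha r}{2}|a|\,x^a.
\]
The first summand is exactly the contribution $\sum_i\tilde R_{a,a-e_i}(x^{a-e_i}-x^a)+\frac{\alpha}{2}|a|x^a$ of the new death transitions to $\sum_b\tilde R_{a,b}x^b$, while the leftover $-\frac{\alpha r}{2}|a|x^a+\frac{\alpha}{2}|a|x^a=-\frac{\alpha(r-1)}{2}|a|x^a$ is absorbed into the killing rate. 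Adding this to the graph--part killing rate $k(a)$ from Lemma~\ref{lemm:dual} yields exactly the $\tilde k(a)$ of \eqref{killing_k}.

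Step (3) then gives, via the analogue of \eqref{semi} for $\{\tilde T_t\}$, that $g(t,a)=\tilde T_tx^a$ satisfies
\[
  \frac{d}{dt}g(t,a)=\sum_{b\in I}\tilde R_{a,b}g(t,b)-\tilde k(a)g(t,a),
  \qquad g(0,a)=x^a,
\]
for every $a\in I$. Since the chain $(\tilde a(t),\tilde{\mathbf P}_a)$ has bounded generator restricted to the finite set $\{b\in I:|b|\le|a|\}$ (transitions only preserve or decrease $|a|$, so the chain never exits this finite set), existence and uniqueness for this ODE system are immediate, and the Feynman--Kac representation yields \eqref{dual_k}.

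I expect no substantive obstacle; the only thing requiring care is the sign/combinatorial bookkeeping for the two different contributions of the drift, namely distinguishing the part that is encoded as a jump of the chain (the $\frac{\alpha}{2}a_i$ rate into $a-e_i$) from the part that must be absorbed into the killing rate (the extra $\frac{\alpha(r-1)}{2}|a|$). One minor point worth flagging is that, unlike in Lemma~\ref{lemm:dual}, the state space is the infinite lattice $I$ rather than the finite set $\Pi_{n,r}^{\ge 0}$, but the conservation of $|a|$ along any sample path up to death transitions, together with the fact that $|a|$ is monotone non-increasing, keeps everything confined to a finite subset and makes the Feynman--Kac step routine.
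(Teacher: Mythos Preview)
Your proposal is correct and follows exactly the approach the paper intends: the paper merely states that Lemma~\ref{lemm:dual_k} ``can be shown in the same manner as Lemma~\ref{lemm:dual}'', and you have carried out precisely that computation, correctly splitting the drift contribution into the new death-jump rates $\tilde R_{a,a-e_i}=\frac{\alpha}{2}a_i$ and the additional killing term $\frac{\alpha(r-1)}{2}|a|$. Your observation that the chain is confined to the finite set $\{b\in I:|b|\le|a|\}$, making the ODE/Feynman--Kac step routine, is a useful point of rigor that the paper leaves implicit.
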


In contrast to the rate matrix $\{R_{a,b}\}$ in \eqref{rate},
particles are erased and it makes the total number of
particles
$|\tilde{a}(t)|=\tilde{a}_1(t)+\cdots+\tilde{a}_r(t)$
decreases. It is clear from the rate matrix \eqref{rate_k}
that $0$ is the unique absorbing state.

\begin{proposition}\label{prop:stop}
  Let $\tau:=\inf\{t>0:\tilde{a}(t)=0\}$, which is a Markov
  time with respect to the Markov chain
  $(\tilde{a}(t),\tilde{\mathbf{P}}_a)$ with
  the rate matrix \eqref{rate_k}. Then,
  \[
  \tilde{\mathbf{P}}_a(\tau<\infty)=1 \quad ~\text{and}~ \quad
  \tilde{\mathbf{E}}_a\tau=\frac{2}{\alpha}\sum_{i=1}^{|a|}\frac{1}{i}.
  \]
\end{proposition}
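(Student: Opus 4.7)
The plan is to reduce the statement to a standard fact about a pure death process, by observing that the total particle count itself forms a simple Markov chain.

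First, I would look at the process $N(t) := |\tilde{a}(t)| = \tilde{a}_1(t) + \cdots + \tilde{a}_r(t)$. Inspecting the rate matrix \eqref{rate_k}, the collision transitions $a \to a - e_i + e_j$ preserve $|a|$, while the killing transitions $a \to a - e_i$ decrease $|a|$ by one at total rate $\frac{\alpha}{2}\sum_{i\in V}a_i = \frac{\alpha}{2}|a|$. In particular, the total rate at which $N(t)$ decreases from a state with $|a|=n$ depends only on $n$ and equals $\alpha n/2$; no transitions increase $N(t)$. Hence $N(t)$ is a pure death Markov chain on $\mathbf{N}_0$ with death rate $\alpha n/2$ at state $n$, and $0$ is absorbing for $\tilde{a}(t)$, so $\tau = \inf\{t>0: N(t)=0\}$.

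Next I would invoke the elementary analysis of such a pure death chain: starting from $N(0)=|a|=n$, the process visits $n, n-1, \ldots, 1, 0$ in succession, and the holding time at state $k \ge 1$ is an independent exponential random variable of rate $\alpha k/2$. Since a finite sum of exponentials is almost surely finite, $\tilde{\mathbf{P}}_a(\tau<\infty)=1$, and by linearity of expectation
\[
\tilde{\mathbf{E}}_a\tau = \sum_{k=1}^{n}\frac{1}{\alpha k/2} = \frac{2}{\alpha}\sum_{k=1}^{|a|}\frac{1}{k},
\]
which is precisely the claimed formula.

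The only genuine content is the first observation, namely that $N(t)$ inherits the Markov property from $\tilde{a}(t)$ because the aggregated death rate $\frac{\alpha}{2}\sum_i a_i$ is a function of $|a|$ alone; once this is noted, the rest is the well-known computation of the mean absorption time for the linear death chain (a coupon-collector-type harmonic sum). I expect no serious obstacle; writing up the reduction to $N(t)$ carefully and citing the exponential holding-time structure is all that is required.
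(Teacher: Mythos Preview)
Your proposal is correct and follows essentially the same approach as the paper: both reduce to the observation that $|\tilde{a}(t)|$ is a linear pure-death chain with rate $\alpha n/2$ at level $n$, and then sum the exponential holding-time means to obtain the harmonic formula. No substantive difference.
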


\begin{proof}
  Since $\tilde{a}(t)=0$ if and only if $|\tilde{a}(t)|=0$,
  we consider the Markov chain of the cardinality
  $|\tilde{a}(t)|$. According to the rate matrix
  \eqref{rate_k}, it is a linear death process with rate
  $\alpha|\tilde{a}(t)|/2$. Noting that $\tau$ is
  the convolution of exponential random variables of rates
  $\alpha i/2$, $i\in\{1,\ldots,|a|\}$, we have the assertion.
\end{proof}  

To illustrate the Markov chain $(\tilde{a}(t),\tilde{\mathbf{P}}_a)$,
let us ask a specific question. What is the moment of $a=e_1+e_2$
from the cycle graph $C_4$? For a chain starts from $a$, there
are four possible sample paths (Figure 2):
\begin{figure}
  \begin{center}
  \includegraphics[width=0.65\textwidth]{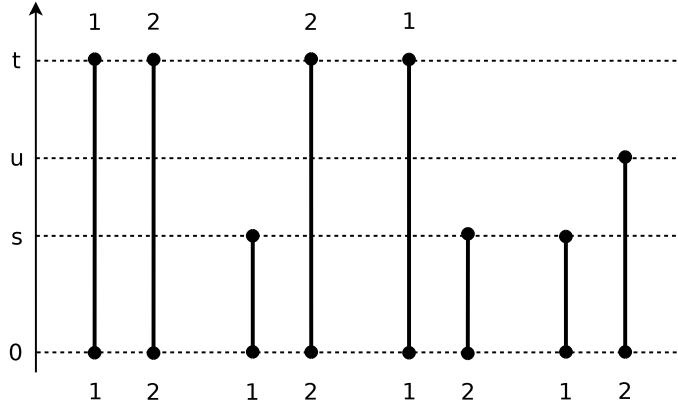}
  \caption{Possible sample paths of the chain
    $(\tilde{a}(t),\tilde{\mathbf{P}}_a)$ on the cycle graph $C_4$
    starts from $e_1+e_2$.}
  \label{fig2}
  \end{center}
\end{figure}
\begin{itemize}
\item[(i)] No particle is erased;
\item[(ii)] The particle 1 is erased but the particle 2
  survives;
\item[(iii)] The particle 2 is erased but the particle 1
  survives;
\item[(iv)] Both particles are erased.
\end{itemize}
The waiting time of either of two particles is erased
follows the exponential distribution of rate $\alpha$.
Since $\tilde{k}(a)=1+3\alpha$ and
$\tilde{k}(e_1)=\tilde{k}(e_2)=3\alpha/2$,
the right hand side of the duality relation \eqref{dual_k}
can be computed as
\begin{align*}
  &x_1x_2 e^{-\alpha t} e^{-(1+3\alpha)t}+
  \frac{x_1+x_2}{2}\int_0^t \alpha e^{-\alpha s}e^{-\alpha (t-s)/2}
  e^{-(1+3\alpha)s-3\alpha(t-s)/2}ds\\
  &+\int_0^t\int_0^u
  \alpha e^{-\alpha s}\frac{\alpha}{2}e^{-\alpha (u-s)/2}
  e^{-(1+3\alpha)s-3\alpha(u-s)/2}dsdu\\
  =&x_1x_2e^{-t(1+4\alpha)}
  +\frac{(x_1+x_2)\alpha}{2(1+2\alpha)}(e^{-2\alpha t}-e^{-(1+4\alpha)t})
  +\frac{\alpha}{4(1+4\alpha)}
  -\frac{\alpha e^{-2\alpha t}}{4(1+2\alpha)}\\
  &+\frac{\alpha^2 e^{-(1+4\alpha)t}}{2(1+2\alpha)(1+4\alpha)},
\end{align*}
where $s>0$ and $u>s$ are the times that a particle
is erased.

The stationary state of the adjoint Markov semigroup
$\{\tilde{T}^*_t\}$ on $\mathcal{P}(\Delta_{r-1})$ induced
by $\{\tilde{T}_t\}$ consists of the unique probability
measure.

\begin{theorem}\label{theo:stat_k}
  For the adjoint Markov semigroup $\{\tilde{T}^*_t\}$,
  there exists the unique stationary state
  $\nu_\alpha\in\mathcal{P}(\Delta_{r-1})$ satisfying
  \[
    \lim_{t\to\infty} \tilde{T}^*_t \delta_x=\nu_\alpha
  \]
  for every $x\in\Delta_{r-1}$.
\end{theorem}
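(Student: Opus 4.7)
The plan is to use the duality of Lemma~\ref{lemm:dual_k} together with the almost-sure absorption of Proposition~\ref{prop:stop} to identify the limiting moments, and then to reconstruct the stationary measure on the compact simplex from its moments.

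For each $a \in I$, Lemma~\ref{lemm:dual_k} gives
\[
  \tilde{T}_t x^a = \tilde{\mathbf{E}}_a\!\left[x^{\tilde{a}(t)}\exp\!\left\{-\int_0^t \tilde{k}(\tilde{a}(s))\,ds\right\}\right].
\]
On $\{\tau\le t\}$ one has $\tilde{a}(s)=0$ for $s\ge\tau$, $\tilde{k}(0)=0$, and $x^0=1$, so the integrand reduces to $\exp\{-\int_0^\tau \tilde{k}(\tilde{a}(s))\,ds\}$, a quantity independent of $x$. Splitting by $\{\tau\le t\}$ versus $\{\tau>t\}$ writes $\tilde{T}_t x^a$ as the sum of a non-decreasing, non-negative term bounded above by $\tilde{T}_t x^a\le 1$ (hence convergent to some $m_a\in[0,1]$ that does not depend on $x$) and a residual expectation restricted to $\{\tau>t\}$.

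The main obstacle is showing that the residual vanishes, equivalently that $\tilde{T}_t x^a \to m_a$ for every $a$ and every $x$. Since $|\tilde{a}(\cdot)|$ is a linear pure-death process of rate $\alpha|\tilde{a}|/2$, the dual chain started from $a$ is confined to the finite set $\{b\in I:|b|\le|a|\}$, on which $\tilde{k}$ is bounded. The crude estimate $e^{Mt}\tilde{\mathbf{P}}_a(\tau>t)\le Ce^{(M-\alpha/2)t}$ fails for large $|a|$ because $\tilde{k}$ is not sign-definite, so a further argument is needed. The approach is to observe that, restricted to the reachable state space, the family $g(t,b):=\tilde{T}_t x^b$ satisfies a finite-dimensional linear ODE driven by the matrix $\tilde{\mathcal{G}}_{b,c}=\tilde{R}_{b,c}-\tilde{k}(b)\delta_{b,c}$, so $g(t,a)$ is a finite combination of modes $e^{\lambda_i t}$ (with possible Jordan contributions). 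The Markov-semigroup bound $g(t,b)\in[0,1]$ uniformly in $x\in\Delta_{r-1}$ forces every spectral mode with $\operatorname{Re}(\lambda_i)>0$ to have zero coefficient, and the already-established convergence of the first summand then pins the full limit at $m_a$.

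Granted moment convergence $\tilde{T}_t x^a \to m_a$, define $\nu_\alpha$ as the unique probability measure on $\Delta_{r-1}$ with $\langle\nu_\alpha,x^a\rangle=m_a$ for every $a\in I$: existence follows from the Hausdorff moment problem on the compact simplex (the $m_a$ arise as limits of genuine moment sequences), and uniqueness from the density of polynomials in $C(\Delta_{r-1})$ by Stone--Weierstrass. Combining pointwise moment convergence with the contractivity $\|\tilde{T}_t f\|_\infty\le\|f\|_\infty$ upgrades to $\tilde{T}_t f(x)\to\langle\nu_\alpha,f\rangle$ for every $f\in C(\Delta_{r-1})$ and every $x\in\Delta_{r-1}$, which is exactly $\tilde{T}^*_t\delta_x\Rightarrow\nu_\alpha$. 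Uniqueness of the stationary state follows at once: for any stationary $\mu$, $\langle\mu,f\rangle=\langle\mu,\tilde{T}_t f\rangle\to\langle\nu_\alpha,f\rangle$, so $\mu=\nu_\alpha$.
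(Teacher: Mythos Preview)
Your approach is the paper's: split the dual expectation over $\{\tau\le t\}$ and $\{\tau>t\}$, identify the first piece via Proposition~\ref{prop:stop}, and reconstruct $\nu_\alpha$ from the limiting moments by Stone--Weierstrass on the compact simplex. The paper's own argument is in fact terser than yours and does not justify why the contribution on $\{\tau>t\}$ vanishes; you are right to isolate this as the genuine obstacle, since $\tilde k$ is not sign-definite and a crude domination bound can fail.

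Your spectral patch, however, does not close the gap as written. Writing $A(t)$ for the piece on $\{\tau\le t\}$ and $B(t)$ for the residual, boundedness $g(t,b)=\tilde T_t x^b\in[0,1]$ does kill every mode with $\operatorname{Re}\lambda_i>0$ and any polynomial Jordan growth on the imaginary axis, but it does \emph{not} exclude purely imaginary eigenvalues, which would give bounded non-convergent oscillations. The clause ``the already-established convergence of the first summand then pins the full limit at $m_a$'' is where the argument stops short: from $A(t)\nearrow m_a$ and $B(t)\ge 0$ you obtain only $\liminf_{t\to\infty}g(t,a)\ge m_a$, not existence of the limit. One clean completion is to use the identity $\sum_{|a|=n}\binom{n}{a}\tilde T_t x^a=\tilde T_t 1=1$, which gives
\[
\sum_{|a|=n}\binom{n}{a}B(t,a)=1-\sum_{|a|=n}\binom{n}{a}A(t,a)
\]
for every $x\in\Delta_{r-1}$; a first-jump decomposition of the dual chain shows that $S_n:=\sum_{|a|=n}\binom{n}{a}m_a$ satisfies $S_n=S_{n-1}$ with $S_0=1$, so the right-hand side decreases to $0$ and each nonnegative term $B(t,a)$ must vanish in the limit. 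Alternatively, observe that each level-$m$ block of your matrix $\tilde{\mathcal G}$ has nonnegative off-diagonal entries, so its spectral abscissa is a real eigenvalue, and then argue that this real eigenvalue is strictly negative for $m\ge 1$.
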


\begin{proof}
  Since the Markov chain $(\tilde{a}(t),\tilde{\mathbf{P}}_a)$
  with the rate matrix \eqref{rate_k} is absorbed into 0,
  Lemma~\ref{lemm:dual_k} and Proposition~\ref{prop:stop}
  give
  \begin{align}
    \lim_{t\to\infty}\tilde{T}_t x^{a}
    =&\lim_{t\to\infty}\tilde{\mathbf{E}}_a
    \left[x^{\tilde{a}(t)}\exp\left\{-\int_0^t \tilde{k}(\tilde{a}(s))ds
    \right\};t\le\tau\right]\nonumber\\
    &+\lim_{t\to\infty}\tilde{\mathbf{E}}_a
    \left[x^{\tilde{a}(t)}\exp\left\{-\int_0^t \tilde{k}(\tilde{a}(s))ds
    \right\};t>\tau\right]\nonumber\\
    =&\tilde{\mathbf{E}}_a\left[
    \exp\left\{-\int_0^\tau \tilde{k}(\tilde{a}(s))ds\right\}
    ;\tau<\infty\right]\nonumber\\
    =&\tilde{\mathbf{E}}_a
    \exp\left\{-\int_0^\tau \tilde{k}(\tilde{a}(s))ds\right\}
    \label{theo:stat_k:eq1}
  \end{align}
  for all $a\in I$. Since
  $\lim_{t\to\infty}\langle\delta_x,\tilde{T}_tx^a\rangle=\lim_{t\to\infty}\langle\tilde{T}^*_t\delta_x,x^a\rangle$ for each $x\in\Delta_{r-1}$,
  there exists a unique probability measure $\nu_\alpha$ satisfying
  $\lim_{t\to\infty}\tilde{T}_tf=\langle\nu_\alpha,f\rangle$
  for all $f\in C(\Delta_{r-1})$.
\end{proof}

The stationary state $\nu_\alpha$ converges weakly to
a common limit as $\alpha\to\infty$ irrespective of
the graph.

\begin{corollary}\label{coro:stat_k}
  The stationary state $\nu_\alpha$ of the adjoint Markov
  semigroup $\{\tilde{T}^*_t\}$ satisfies
  \[
    \lim_{\alpha\to\infty}\nu_{\alpha}=
    \delta_{(1/r,\cdots,1/r)}.
  \]  
\end{corollary}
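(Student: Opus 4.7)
The plan is to reduce weak convergence to convergence of moments and then exploit the Feynman--Kac representation given by duality. Since $\Delta_{r-1}$ is compact, the Stone--Weierstrass theorem implies that the monomials $x^a$, $a\in I$, span a dense subspace of $C(\Delta_{r-1})$, so it suffices to verify that $\langle\nu_\alpha,x^a\rangle\to(1/r)^{|a|}$ for every $a\in I$. Combining $\langle\nu_\alpha,x^a\rangle=\lim_{t\to\infty}\tilde{T}_tx^a$ with equation \eqref{theo:stat_k:eq1} in the proof of Theorem~\ref{theo:stat_k} gives
\[
\langle\nu_\alpha,x^a\rangle=\tilde{\mathbf{E}}_a\exp\Bigl\{-\int_0^\tau\tilde{k}(\tilde{a}(s))ds\Bigr\},
\]
so the whole task is an asymptotic analysis of this expectation as $\alpha\to\infty$.

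The driving heuristic is that at any state $b$ the death rate $\alpha|b|/2$ dominates the total collision rate $c(b)=O(1)$, so with probability $1-O(\alpha^{-1})$ the next jump is a death. A union bound over the at most $|a|$ jumps before absorption gives $\tilde{\mathbf{P}}_a(A_\alpha)=1-O(\alpha^{-1})$ for the event $A_\alpha$ that no collision occurs in $[0,\tau]$. On $A_\alpha$ the chain visits states $a=a^{(0)}\to a^{(1)}\to\cdots\to a^{(|a|)}=0$, and the holding time $\Delta T_j$ at $a^{(j-1)}$ is exponential with rate $\alpha(|a|-j+1)/2+c_j$ with $c_j=O(1)$; the rescaled variables $E_j:=(\alpha(|a|-j+1)/2+c_j)\Delta T_j$ are independent standard exponentials. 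Writing $\tilde{k}(b)=\alpha(r-1)|b|/2+\kappa(b)$ with $\kappa(b)$ bounded uniformly in $\alpha$ over the finitely many states reachable from $a$, one finds
\[
\int_0^\tau\tilde{k}(\tilde{a}(s))ds=\sum_{j=1}^{|a|}\Delta T_j\,\tilde{k}(a^{(j-1)})\xrightarrow{d}(r-1)\sum_{j=1}^{|a|}E_j,
\]
because the coefficient of $E_j$ tends to $r-1$ and the correction $\kappa(a^{(j-1)})\Delta T_j$ is $O(\alpha^{-1})$. The limiting expectation is then $\mathbf{E}\exp\{-(r-1)\sum_{j=1}^{|a|}E_j\}=(1/r)^{|a|}$, matching the moments of $\delta_{(1/r,\ldots,1/r)}$.

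The principal obstacle is justifying the interchange of limit and expectation and controlling the contribution of $A_\alpha^c$. I would use that $\tilde{a}_i(s)\le|a|$ along every trajectory to obtain a uniform lower bound $\tilde{k}(\tilde{a}(s))\ge -C|a|^2$ with $C$ depending only on the graph, so that $\exp\{-\int_0^\tau\tilde{k}\}\le\exp(C|a|^2\tau)$. Because collisions preserve $|\tilde{a}|$, $\tau$ is the absorption time of the linear death process of Proposition~\ref{prop:stop} and is thus a sum of independent exponentials with rates $\alpha j/2$; its moment generating function at any fixed argument $s>0$ equals $\prod_{j=1}^{|a|}\alpha j/(\alpha j-2s)$, which is uniformly bounded for $\alpha$ large and tends to $1$ as $\alpha\to\infty$. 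This supplies the uniform integrability needed to pass the limit through the expectation on $A_\alpha$ via Vitali's theorem, while a Cauchy--Schwarz estimate against $\tilde{\mathbf{P}}_a(A_\alpha^c)=O(\alpha^{-1})$ dispatches the $A_\alpha^c$ contribution, completing the programme.
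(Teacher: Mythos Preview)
Your argument is correct, and it shares with the paper the basic decomposition $\tilde{k}(b)=\tfrac{\alpha}{2}(r-1)|b|+\kappa(b)$ with $\kappa$ bounded. The paper, however, proceeds more directly and avoids the split into $A_\alpha$ and $A_\alpha^c$ entirely. The key observation (which you state yourself when bounding $\tau$, but use only for uniform integrability) is that collisions do not change $|\tilde a(t)|$, so the pure death process $|\tilde a(t)|$ with holding times $s_i\sim\mathrm{Exp}(\alpha i/2)$ governs both $\tau=\sum_i s_i$ and $\int_0^\tau |\tilde a(s)|\,ds=\sum_i i\,s_i$ on \emph{every} sample path, not just on $A_\alpha$. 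Bounding $\tilde k$ above and below by $\tfrac{\alpha}{2}(r-1)|\tilde a(s)|\pm c_n$ therefore sandwiches $\langle\nu_\alpha,x^a\rangle$ between two explicit products $\prod_{i=1}^{n}\alpha i/(\alpha r i\mp 2c_n)$, both of which converge to $r^{-n}$; no conditioning, no Vitali, no Cauchy--Schwarz.

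Your route, by contrast, singles out the dominant event $A_\alpha$, identifies the limiting law $(r-1)\sum_j E_j$ of the integral there, and then patches things up with uniform integrability for $A_\alpha$ and a second-moment bound for $A_\alpha^c$. This is perfectly sound and arguably more transparent about \emph{why} the limit is $r^{-n}$ (it is the Laplace transform of a Gamma at $r-1$), but it is longer: you have to justify that the rescaled holding times remain i.i.d.\ standard exponentials independent of the jump-chain event $A_\alpha$, and that the random coefficients $\tilde k(a^{(j-1)})/\lambda_j$ converge to $r-1$ uniformly over the possible death sequences. Pushing your own death-process observation into the main estimate, as the paper does, would let you bypass all of that.
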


\begin{proof}
  Since the killing rate \eqref{killing_k} is bounded
  and $\tilde{k}(a)=\alpha(r-1)|a|/2+O(1)$ for large $\alpha$,
  the leading contribution to the expression
  \eqref{theo:stat_k:eq1} can be evaluated by the death
  process $|\tilde{a}(t)|$ considered in
  Proposition~\ref{prop:stop}, whose waiting time follows
  the exponential distribution of rate
  $\alpha|\tilde{a}(t)|/2$. Let $n=|\tilde{a}(0)|=|a|$.
  We have
  \begin{align*}
    \langle\nu_\alpha,x^a\rangle
    &=\tilde{\mathbf{E}}_a
    \exp\left\{-\int_0^\tau \tilde{k}(\tilde{a}(s))ds\right\}\\
    &\le \int_0^\infty ds_1 \cdots ds_n
    \prod_{i=1}^n
    \left\{e^{-\frac{\alpha}{2}(r-1)is_i+c_n}
    \frac{\alpha}{2}ie^{-\frac{\alpha}{2}is_i}\right\}\\
    &=\left(\frac{\alpha}{2}\right)^{n}n!
    \prod_{i=1}^n\frac{2}{\alpha ri+2c_n},  
    \quad\forall a\in I,
  \end{align*}
  where $c_n$ is a constant satisfying
  $\tilde{k}(b)\le \alpha(r-1)|b|/2+c_n$ for all $b$
  satisfying $|b|\in\{1,\ldots,n\}$. In the same way,
  $\langle\nu_\alpha,x^a\rangle$ is bounded below.
  The assertion follows by taking the limit $\alpha\to\infty$
  of these bounds.
\end{proof}

Moreover, the stationary state $\nu_\alpha$ has a continuous
and strictly positive density.

\begin{theorem}\label{theo:stat2}
  For the adjoint Markov semigroup $\{\tilde{T}^*_t\}$,
  the unique stationary state $\nu_\alpha\in\mathcal{P}(\Delta_{r-1})$
  is absolutely continuous with respect to the Lebesgue
  measure on $\Delta_{r-1}$ and admits a probability density
  that is strictly positive in ${\rm int}(\Delta_{r-1})$
  and of $C^\infty(\Delta_{r-1})$-class.
\end{theorem}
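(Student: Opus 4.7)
The plan is to combine standard elliptic regularity in the interior of $\Delta_{r-1}$ with the strong maximum principle for the adjoint of $\tilde L$, and to treat the boundary separately. First I would verify strict ellipticity of the principal part of $\tilde L$ on ${\rm int}(\Delta_{r-1})$. Writing \eqref{sde_k} in the form
\[
dx=\sum_{\{i,j\}\in E}\sqrt{x_ix_j}\,(e_i-e_j)\,dB_{ij}+\frac{\alpha}{2}(\mathbf{1}-rx)\,dt,
\]
the diffusion matrix $\Sigma(x)=(\sigma_{ij}(x))$ satisfies, for every $v$ in the tangent space $T_x\Delta_{r-1}=\{w\in\mathbf{R}^r:\sum_i w_i=0\}$,
\[
v^\top\Sigma(x)v=\sum_{\{i,j\}\in E}x_ix_j(v_i-v_j)^2.
\]
Because $\mathcal{G}$ is connected and $x_ix_j>0$ throughout ${\rm int}(\Delta_{r-1})$, this quadratic form vanishes only at $v=0$; hence $\tilde L$ is strictly elliptic, with smooth (polynomial) coefficients, on the open manifold ${\rm int}(\Delta_{r-1})$.

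Second, the stationarity condition $\tilde T_t^*\nu_\alpha=\nu_\alpha$ together with the semigroup identity \eqref{semi} (adapted to $\tilde T_t$) yields $\langle\nu_\alpha,\tilde L f\rangle=0$ for every $f\in C_0^2({\rm int}(\Delta_{r-1}))$, i.e.\ the formal adjoint equation $\tilde L^{*}\nu_\alpha=0$ holds in the sense of distributions on ${\rm int}(\Delta_{r-1})$. Since $\tilde L^{*}$ is a strictly elliptic linear second-order operator with $C^\infty$ coefficients on this open set, Weyl's lemma (interior regularity for elliptic distributional solutions) implies that $\nu_\alpha$ is absolutely continuous with respect to Lebesgue measure on $\Delta_{r-1}$ and that its density $\rho_\alpha$ lies in $C^\infty({\rm int}(\Delta_{r-1}))$.

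Third, strict positivity of $\rho_\alpha$ inside the simplex follows from the strong maximum principle applied to the non-negative smooth solution of $\tilde L^{*}\rho_\alpha=0$: on any connected open subset of ${\rm int}(\Delta_{r-1})$, $\rho_\alpha$ either vanishes identically or is strictly positive, and the first alternative is excluded because $\int\rho_\alpha\,dx=1$. A probabilistic alternative is to invoke the Stroock--Varadhan support theorem, using that the drift $\alpha(1-rx_i)/2$ is strictly positive on each face and that the noise is nondegenerate in the interior, so the diffusion can reach any interior ball from any starting point with positive probability.

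The chief obstacle is promoting smoothness from ${\rm int}(\Delta_{r-1})$ up to each face of the closed simplex, since $\Sigma(x)$ degenerates on the boundary and standard elliptic regularity breaks down there. To handle this one would work in weighted Sobolev spaces adapted to the boundary degeneracy, as is classical in the Fleming--Viot/Wright--Fisher literature, using the inward-pointing mutation-like drift $\alpha(1-rx_i)/2$ to control boundary singularities; alternatively, the explicit moment expression \eqref{theo:stat_k:eq1} from the proof of Theorem~\ref{theo:stat_k} controls the density near each face via its moments and can be used to deduce the required smooth extension. This boundary analysis is the main technical content; once ellipticity is verified in the interior, the interior claims reduce to standard PDE facts.
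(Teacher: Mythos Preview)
Your approach is sound for the interior claims but is genuinely different from the paper's. The paper never invokes elliptic regularity or the maximum principle. Instead, for smoothness it expands the characteristic function $\langle\nu_\alpha,e^{\sqrt{-1}\sum_i n_i x_i}\rangle$ as a power series whose coefficients are the moments $\tilde{\mathbf E}_a\exp\{-\int_0^\tau\tilde k(\tilde a(s))\,ds\}$ supplied by Theorem~\ref{theo:stat_k}, and then writes down the associated Fourier series on the simplex as the density $p_\alpha$; the regularity claim is read off from this representation. For strict positivity the paper argues by contradiction via Bernstein polynomial approximation: if $p_\alpha(\bar x)=0$ at some interior $\bar x$, then all Bernstein coefficients $p_\alpha(i/n)$ must be small, forcing $p_\alpha\equiv 0$ and contradicting $\langle\nu_\alpha,x^a\rangle>0$ from \eqref{theo:stat_k:eq1}.

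Your route is cleaner and more self-contained for the interior: the ellipticity computation $v^\top\Sigma(x)v=\sum_{\{i,j\}\in E}x_ix_j(v_i-v_j)^2$ together with connectedness of $\mathcal G$ is exactly right, and Weyl's lemma plus Harnack/strong maximum principle dispose of interior smoothness and positivity with no reference to the dual chain. The paper's route, by contrast, is entirely probabilistic and ties the density directly to the moment formula \eqref{theo:stat_k:eq1}; this keeps the argument within the duality framework of Section~\ref{sect:drift} and, in principle, gives global information in one stroke rather than requiring a separate boundary step. You have correctly identified the boundary extension as the genuine technical content of the $C^\infty(\Delta_{r-1})$ claim; neither your sketch nor the paper's proof makes this step fully rigorous, but the paper's Fourier/moment representation is at least formally global, whereas your PDE argument would need the weighted-Sobolev machinery you mention to reach the faces.
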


\begin{proof}
  We first show that $\nu_\alpha$ has a density of
  $C^\infty(\Delta_{r-1})$-class. By Theorem~\ref{theo:stat_k}, we have
  \begin{align*}
    \langle \nu_\alpha,e^{\sqrt{-1}\sum_{i\in V}x_in_i}\rangle
    &=\sum_{j=0}^\infty\frac{(\sqrt{-1})^j}{j!}
    \sum_{a\in\Pi_{j,r}^{\ge 0}}
    \left(\begin{array}{c}j\\a\end{array}\right)
    \langle\nu_\alpha,(xn)^a\rangle\\
    &=\sum_{j=0}^\infty\frac{(\sqrt{-1})^j}{j!}
    \sum_{a\in\Pi_{j,r}^{\ge 0}}
    \left(\begin{array}{c}j\\a\end{array}\right)n^a
    \tilde{\mathbf{E}}_a
    \exp\left\{-\int_0^\tau \tilde{k}(\tilde{a}(s))ds\right\}
  \end{align*}    
  for each $n\in\mathbf{Z}^V$. Therefore, $\nu_\alpha$ has a
  $C^\infty(\Delta_{r-1})$-density represented as
  \begin{align*}
    p_\alpha(x)
    =&\sum_{n\in\mathbf{Z}^V}
    e^{-\sqrt{-1}\sum_{i\in V}x_in_i}
    \sum_{j=0}^\infty\frac{(\sqrt{-1})^j}{j!}
    \sum_{a\in\Pi_{j,r}^{\ge 0}}
    \left(\begin{array}{c}j\\a\end{array}\right)n^a
    \tilde{\mathbf{E}}_a
    \exp\left\{-\int_0^\tau \tilde{k}(\tilde{a}(s))ds\right\}.
  \end{align*}
  We next show that the density $p_\alpha$ is strictly positive
  in ${\rm int}(\Delta_{r-1})$.
  Consider an approximation of $p_\alpha(x)$ by polynomials:
  \[
    p_\alpha^{(n)}(x)=
    \sum_{i_1=0}^n\cdots\sum_{i_r=0}^n p_\alpha
    \left(\frac{i_1}{n},\ldots,\frac{i_r}{n}\right)
    \left(\begin{array}{c}n\\i\end{array}\right)x^i
  \]
  satisfying $\lim_{n\to\infty}p^{(n)}_\alpha(x)=p_\alpha(x)$.
  Suppose there exist a point
  $\bar{x}\in{\rm int}(\Delta_{r-1})$ satisfying
  $p_\alpha(\bar{x})=0$. Since the monomial $\bar{x}^i$
  is strictly positive, for any small positive constants
  $\epsilon_1$ and $\epsilon_2$ there exist $N$ such that 
  \[
    p_\alpha
    \left(\frac{i_1}{n},\ldots,\frac{i_r}{n}\right)<\epsilon_1,
    \quad \forall i\in\{0,\ldots,n\}^V
  \]
  and ${\rm int}(\Delta_{r-1})$ is covered by
  open balls:
  \[
    B_{n}(i)=\{x:|x_{i_j}-i_j/n|<\epsilon_2\},
    \quad i\in\{0,\ldots,n\}^V
  \]
  for all $n>N$. Since $p_\alpha$ is smooth, for every
  point $x\in{\rm int}(\Delta_{r-1})$ we can find a ball
  containing $x$ and 
  $p_\alpha(x)<\epsilon_1+c\epsilon_2$ for some constant $c$.
  This implies $p_\alpha(x)=0$, $\forall x\in{\rm int}(\Delta_{r-1})$,
  but it contradicts to the fact that
  $\langle\nu_{\alpha},x^a\rangle>0$, $\forall a\in I$
  followed by the expression \eqref{theo:stat_k:eq1}, because
  the killing rate $\tilde{k}(\tilde{a})$ is bounded
  and the Markov time satisfies
  $\tilde{\mathbf{P}}(\tau<\infty)=1$ by
  Proposition~\ref{prop:stop}.
\end{proof}  
    
An immediately consequence is the following corollary,
which is an analogous result to Corollary~\ref{coro:mom}.

\begin{corollary}\label{coro:mom_k}
  The moments of the stationary state $\nu_\alpha$ of
  the adjoint Markov semigroup $\{\tilde{T}^*_t\}$ are positive, namely,
  \begin{equation}\label{moment_k}
    m_{a}(\alpha)
    :=\langle\nu_\alpha,x^a\rangle=\tilde{\mathbf{E}}_{\nu_\alpha} x^a>0,
    \quad \text{for~each} ~~ a \in I.
  \end{equation}
\end{corollary}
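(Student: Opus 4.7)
The plan is to deduce strict positivity of the moments directly from the probabilistic representation of $\nu_\alpha$ obtained inside the proof of Theorem~\ref{theo:stat_k}. Letting $t\to\infty$ in the duality identity \eqref{dual_k} and invoking Proposition~\ref{prop:stop} (which yields $\tilde{\mathbf{P}}_a(\tau<\infty)=1$), one arrives at
\[
  m_a(\alpha) = \tilde{\mathbf{E}}_a \exp\left\{-\int_0^\tau \tilde{k}(\tilde{a}(s))\,ds\right\}, \quad \forall a\in I.
\]
For $a=0$ this is trivially equal to $1$, so from now on assume $|a|\ge 1$.

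The next step is to bound the integrand away from zero. The transitions in the rate matrix \eqref{rate_k} either shift a particle along an edge or erase a particle, so the total mass is non-increasing: $|\tilde{a}(t)|\le|a|$ for all $t\ge 0$. The chain $(\tilde{a}(t))$ therefore takes values in the finite set $\{b\in I:|b|\le|a|\}$, on which the killing rate \eqref{killing_k} is bounded above by a finite constant $C=C(a)$. Substituting this bound inside the exponential gives
\[
  m_a(\alpha)\ge \tilde{\mathbf{E}}_a e^{-C\tau} > 0,
\]
where positivity of the expectation is a consequence of $\tau<\infty$ almost surely, so that $e^{-C\tau}$ is a strictly positive random variable.

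No real obstacle is expected here: the substantive work (existence of $\nu_\alpha$ and its characterization as the $t\to\infty$ limit of the moments) has already been absorbed into Theorem~\ref{theo:stat_k}, and what remains is essentially a one-line combination with the boundedness of $\tilde{k}$ along trajectories of bounded mass. An alternative route would invoke Theorem~\ref{theo:stat2} instead: since $\nu_\alpha$ admits a $C^\infty$ density that is strictly positive on ${\rm int}(\Delta_{r-1})$ and $x^a$ is strictly positive there, the integral $\int x^a\,d\nu_\alpha$ is automatically positive. The probabilistic route is preferable because it does not depend on the stronger regularity and positivity statements of Theorem~\ref{theo:stat2}, making Corollary~\ref{coro:mom_k} logically independent of that theorem.
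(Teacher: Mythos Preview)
Your proof is correct. The paper takes the route you describe as the ``alternative'': it presents Corollary~\ref{coro:mom_k} as an immediate consequence of Theorem~\ref{theo:stat2}, since a strictly positive density on ${\rm int}(\Delta_{r-1})$ forces $\int x^a\,d\nu_\alpha>0$. However, if you look inside the paper's proof of Theorem~\ref{theo:stat2}, the strict positivity of the density is itself deduced from the strict positivity of the moments, and the latter is justified there by exactly your argument (the expression~\eqref{theo:stat_k:eq1}, boundedness of $\tilde{k}$ along the trajectory, and $\tilde{\mathbf{P}}_a(\tau<\infty)=1$ from Proposition~\ref{prop:stop}). So the underlying reasoning is the same; your presentation simply isolates it and places Corollary~\ref{coro:mom_k} logically prior to Theorem~\ref{theo:stat2} rather than as a consequence of it, which is the cleaner dependency structure.
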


The moments of the stationary state can be obtained by the condition
for the stationary state \eqref{stat}. It gives a system of
recurrence relations:
\begin{equation}
  0=\sum_{i\in V}\sum_{j\in N(i)}\frac{a_i(a_i-1)}{2}
  m_{a-e_i+e_j}
  -\sum_{i\in V}\sum_{j\in N(i)}\frac{a_ia_j}{2}m_a
  +\frac{\alpha}{2}\sum_{i\in V}a_i(m_{a-e_i}-rm_a)
  \label{mom_k}
\end{equation}  
for each $a\in I$ with the boundary condition $m_0=1$.
In contrast to the system of ordinary differential
equations \eqref{mom}, this system is not closed
among the moments of the same order. In prior to solve
the system for the moment of a given monomial $a$, we
have to solve the systems for the moments of lower orders
than $a$. Therefore, it seems a formidable task to
solve the system \eqref{mom_k}. The diffusion on
a complete graph is an exception.

\begin{example}\label{exam:dir-multi}
  Let $\mathcal{G}=K_r$, which is the complete graph
  consisting of $r$ vertices discussed in
  Example~\ref{exam:K}. The unique solution of
  the system of recurrence relations \eqref{mom_k} is
  \begin{equation}\label{exam:dir-multi:eq1}
    m_a(\alpha)=\frac{\prod_{i=1}^r(\alpha)_{a_i}}
    {(r\alpha)_{n}}, \quad
    \forall a\in I.
  \end{equation}
  Moreover, since this expression is the moments of
  the symmetric Dirichlet distribution of parameter
  $\alpha$, the stationary state is the Dirichlet
  distribution:
  \[
    \nu_\alpha(x_1,\ldots,x_r)=
    \frac{\Gamma(r\alpha)}{\{\Gamma(\alpha)\}^r}
    \prod_{i=1}^r x_i^{\alpha-1}dx_1\cdots dx_r.
  \]  
\end{example}

\begin{remark}\label{rema:esf}
  The limit of the moments \eqref{exam:dir-multi:eq1}
  is known as the Dirichlet-multinomial distribution
  up to multiplication of the multinomial coefficient.
  Renumbering the set ${\rm supp}(a)$ by
  $\{1,\ldots,l\}$ and taking the limit $\alpha\to 0$
  and $r\to\infty$ with $r\alpha=\theta>0$,
  the expression \eqref{exam:dir-multi:eq1} reduces to
  the form
  \begin{equation}\label{esf}
    \frac{\theta^{l}}{(\theta)_n}\prod_{i=1}^r(a_i-1)!,
    \quad \forall a\in\Pi_{n,l},
  \end{equation}
  which is known as the Ewens sampling formula, or
  the exchangeable partition probability function of
  the Dirichlet prior process in Bayesian statistics
  (see, e.g., \cite{Mano18} for an introduction).
  Karlin and McGregor \cite{KM72} derived this formula
  by using a system of recurrence relations based on
  coalescents mentioned in Remark~\ref{rema:coa}. In
  this sense, we have found an alternative system of
  recurrence relations \eqref{mom_k} the formula
  \eqref{esf} satisfies based on collisions.
\end{remark}

\section{Applications}
\label{sect:appl}

In this section, we present applications of developed
results in previous sections. 

\subsection{Finding independent sets of graphs}

Itoh et al.'s discrete stochastic model described in
Section~\ref{sect:intr} stops when the set of vertices
occupied by at least one particle constitutes
an independent set of a graph. The model is summarized
as the following procedure.

\begin{algorithm}~\label{algo:fin}
  \begin{itemize}
  \item[] Input: A graph $\mathcal{G}=(V,E)$ with
    vertices $V=\{1,\ldots,r\}$, the number of particles
    $N\ge r$, and an integer $M\in\mathbf{N}$ to stop
    the iteration.
  \item[] Output: A candidate of an independent set of
    $\mathcal{G}$.
  \end{itemize}
  \begin{itemize}
  \item[] Step 1: Assign particles to vertices such that
    at least one particle is assigned to each vertex.  
  \item[] Step 2: Set $c=0$.
  \item[] Step 3: Choose two distinct particles uniformly
    random. Let $i\in V$ and $j\in V$ be the vertices to
    which the particles are assigned.
  \item[] Step 4: If $i\sim j$, then assign both particles
    to $i$ or $j$ with probability $1/2$, and go to Step
    2. Otherwise, $c\leftarrow c+1$.
  \item[] Step 5: If $c<M$, go to Step 3.
  \item[] Step 6: Output the list of vertices to which at
    least one particle is assigned. 
  \end{itemize}
\end{algorithm}

The cardinality of the set of vertices to which at least
one particle is assigned decreases, and the set eventually
reduces to an independent set of $\mathcal{G}$.
The integer $M$ is needed to confirm that we cannot choose
particles from neighboring vertices. If $M$ is sufficiently
large, Algorithm~\ref{algo:fin} provides an independent
set with high probability.

A natural question is how many steps is needed to find
an independent set. To answer this question seems
hard, but regarding the diffusion satisfying
the stochastic differential equation \eqref{sde} as
an approximation of the procedure of Algorithm~\ref{algo:fin},
we can deduce some rough idea. Because of the scaling in
the diffusion limit, the unit time in the diffusion
corresponds to the $N(N-1)/2$ iterations of Steps 3 and
4 of Algorithm~\ref{algo:samp}.

According to the argument of Proposition~\ref{prop:start},
a sample path of the diffusion $(x(t),P_x)$ starts form
a point $x\in{\rm int}(\Delta_{r-1})$ is absorbed into
lower dimensional faces and eventually absorbed into
a face corresponding to an independent set.

\begin{proposition}\label{prop:alg}
  Let $U\subset V$ be a set of vertices which is not
  an independent set of a graph $\mathcal{G}$. For
  a sample path of the diffusion $(x(t),P_x)$ starts from
  a point $x\in{\rm int}({\rm conv}\{e_i:i\in U\})$,
  the Markov time
  \begin{equation}
    \tau_U=
    \inf\{t>0:x(t)\in{\rm Bd}({\rm conv}\{e_i:i\in U\})\}
    \label{exit}
  \end{equation}
  satisfies
  \[
    \mathbf{P}_x(\tau_U>t) \ge c_x e^{-t |E_U|},
    \quad t>0,
  \]  
  where $E_U$ is the edge set of the induced subgraph
  of $\mathcal{G}$ consisting of $U$,
  ${\rm Bd}({\rm conv}\{e_i:i\in U\})$ is the boundary
  of ${\rm conv}\{e_i:i\in U\}$, and $c_x\in(0,1]$ is
  a constant depending on $x$.
\end{proposition}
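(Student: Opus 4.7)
The plan is to exploit the fact that the face $\mathrm{conv}\{e_i : i \in U\}$ is invariant under the diffusion---by the martingale argument of Proposition~\ref{prop:start}, any coordinate $x_j$ starting at zero stays at zero almost surely---and then to identify $\tau_U$ with the first vanishing time of a specific monomial whose expectation is given exactly by duality.

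First I would observe that, since each $x_i(t)$ is a non-negative martingale, once a coordinate hits zero it is pinned at zero. Thus, for each $i \in U$, the event $\{x_i(s) > 0 \,\forall s \le t\}$ coincides with $\{x_i(t) > 0\}$. Starting from $x \in \mathrm{int}(\mathrm{conv}\{e_i : i \in U\})$, the diffusion stays in $\mathrm{conv}\{e_i : i \in U\}$ for all time, and it lies in the relative boundary of this face exactly when some $x_i(t) = 0$ with $i \in U$. Hence
\[
\{\tau_U > t\} = \Bigl\{\prod_{i \in U} x_i(t) > 0\Bigr\}.
\]

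Next I would apply the duality of Lemma~\ref{lemm:dual} to the monomial $x^a$ with $a = \sum_{i \in U} e_i$. Since $a_i \in \{0,1\}$ at every vertex, all off-diagonal rates $R_{a,a-e_i+e_j} = a_i(a_i-1)/2$ vanish, so $a$ is an absorbing state of the dual chain and $a(s) \equiv a$. The killing rate \eqref{killing} reduces to
\[
k(a) = \sum_{i \in V}\sum_{j \in N(i)} \frac{a_i a_j}{2} = |E_U|,
\]
since $a_i(a_i - 1) = 0$ and the remaining double sum counts each edge of $E_U$ twice. Lemma~\ref{lemm:dual} then gives the closed form
\[
\mathbf{E}_x\Bigl[\prod_{i \in U} x_i(t)\Bigr] = \Bigl(\prod_{i \in U} x_i\Bigr) e^{-t|E_U|}.
\]

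Finally I would convert this exact identity into the claimed probability lower bound. By AM--GM, $\prod_{i \in U} x_i(t) \le |U|^{-|U|}$ pointwise on $\Delta_{r-1}$, and the product vanishes on $\{\tau_U \le t\}$ by the first step; therefore
\[
\Bigl(\prod_{i \in U} x_i\Bigr) e^{-t|E_U|} = \mathbf{E}_x\Bigl[\prod_{i \in U} x_i(t);\,\tau_U > t\Bigr] \le |U|^{-|U|} \mathbf{P}_x(\tau_U > t),
\]
which gives the claim with $c_x = |U|^{|U|} \prod_{i \in U} x_i$, lying in $(0,1]$: strictly positive because $x$ is in the relative interior and bounded by $1$ by the same AM--GM inequality. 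The only subtle point is the pinning of coordinates at zero underlying the first step, but this is exactly the content of the Proposition~\ref{prop:start} argument; conceptually, the duality does essentially all of the heavy lifting, turning what looks like a hitting-time estimate into a one-line Markov-type inequality.
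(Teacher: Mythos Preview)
Your proof is correct and follows essentially the same route as the paper's: you apply the duality of Lemma~\ref{lemm:dual} to the indicator monomial $x^a$ with $a=\sum_{i\in U}e_i$ (the paper does this implicitly by citing the argument of Theorem~\ref{theo:stat}), obtain $\mathbf{E}_x\bigl[\prod_{i\in U}x_i(t)\bigr]=\bigl(\prod_{i\in U}x_i\bigr)e^{-t|E_U|}$, and then convert to a probability bound via the AM--GM upper bound $\prod_{i\in U}x_i(t)\le |U|^{-|U|}$, yielding the identical constant $c_x=|U|^{|U|}\prod_{i\in U}x_i$. Your write-up is in fact more explicit than the paper's in justifying the identification $\{\tau_U>t\}=\{\prod_{i\in U}x_i(t)>0\}$ via the pinning-at-zero property of non-negative martingales.
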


\begin{proof}
  By the argument in the proof of Theorem~\ref{theo:stat},
  we have
  \begin{align*}
    \mathbf{E}_x(x(t))&=\mathbf{E}_x\{x(t);x(t)>0\}
    \le\max(x)\mathbf{E}_x\{1(x(t)>0)\}\\
    &=\max(x)
    \mathbf{P}_x
    \{x(t)\in{\rm int}({\rm conv}\{e_i:i\in U\})\}\\
    &=\max(x)\mathbf{P}_x(\tau_U>t), \quad
    x=\prod_{i\in U}x_i,
  \end{align*}
  while $\mathbf{E}_x(x(t))=x e^{-t |E_U|}$.
  Choose $c_x=x\{\max(x)\}^{-1}=x|U|^{|U|}$. 
\end{proof}

The author does not find any other property of
the Markov time $\tau_U$ for generic graphs, but
the diffusion on a complete graph is an exception;
the probability distribution function can be
obtained exactly.

\begin{proposition}\label{prop:exit}
  Let $\mathcal{G}$ be a complete graph $K_r$.
  For a face $\Delta_{s-1}={\rm conv}\{e_i:i\in U\}$,
  $2\le s\le r$, the distribution of the Markov time
  \eqref{exit} is represented as
  \begin{align*}
    \mathbf{P}_x(\tau_U>t)
    &=\sum_{i\ge s}(2i-1)(-1)^ie^{-\frac{i(i-1)}{2}t}\\
    \times&\sum_{l=s-2}^{i-2}\frac{(2-i)_l(i+1)_l}{l!(l+1)!}
    \left\{
      \sum_{a\in\Pi_{l+2,s}}
      \left(\begin{array}{c}l+2\\a\end{array}\right)x^a
      -\sum_{b\in\Pi_{l+1,s}}
      \left(\begin{array}{c}l+1\\b\end{array}\right)x^b
    \right\}.
  \end{align*}
\end{proposition}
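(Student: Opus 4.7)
My plan is to reduce to the Wright--Fisher diffusion on $K_s$, to approximate the indicator of the open face by the symmetric polynomials $A_n(x):=\sum_{a\in\Pi_{n,s}}\binom{n}{a}x^a$, and then to apply the Jacobi-polynomial spectral decomposition of the one-dimensional Wright--Fisher generator via inclusion--exclusion to recover the stated expansion in the limit $n\to\infty$.

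First, the martingale argument of Proposition~\ref{prop:start} shows that a sample path of $(x(t),P_x)$ starting at $x\in{\rm int}({\rm conv}\{e_i:i\in U\})$ almost surely satisfies $x_i(t)=0$ for every $i\notin U$ and $t\ge0$. Restricted to ${\rm conv}\{e_i:i\in U\}$, the stochastic differential equation \eqref{sde} on $K_r$ is exactly the Wright--Fisher equation on $K_s$ with $s=|U|$, so without loss of generality I take $\mathcal{G}=K_s$ and $U=V$. The polynomial $A_n(x)$ equals the probability that a multinomial$(n,x)$ sample occupies every one of its $s$ cells, so $A_n(x)\to\mathbf{1}_{{\rm int}(\Delta_{s-1})}(x)$ boundedly on $\Delta_{s-1}$; combining this with $\{\tau_U>t\}=\{x(t)\in{\rm int}(\Delta_{s-1})\}$ and bounded convergence,
\[
  \mathbf{P}_x(\tau_U>t)=\lim_{n\to\infty}T_tA_n(x)=\lim_{n\to\infty}\sum_{a\in\Pi_{n,s}}\binom{n}{a}m_a(t).
\]

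Second, I use the inclusion--exclusion $A_n(x)=\sum_{T\subset V}(-1)^{|V\setminus T|}y_T(x)^n$ with $y_T(x):=\sum_{i\in T}x_i$. It\^o applied to \eqref{sde} on $K_s$ shows that $y_T(t)$ is a one-dimensional Wright--Fisher diffusion on $[0,1]$, so each $\mathbf{E}_x[y_T(t)^n]$ admits the classical spectral expansion with eigenvalues $-i(i-1)/2$ and Jacobi-polynomial coefficients $P_{i-2}^{(1,1)}(1-2y_T)$. Substituting into the inclusion--exclusion sum, collecting by eigenvalue, and using the hypergeometric identity ${}_2F_1(2-i,i+1;2;z)=\sum_{l\ge0}(2-i)_l(i+1)_l/[l!(l+1)!]\,z^l$ (which truncates at $l=i-2$ since $(2-i)_{i-1}=0$), the coefficient of $e^{-i(i-1)t/2}$ collapses into the telescoping form $\sum_l(2-i)_l(i+1)_l/[l!(l+1)!]\cdot(A_{l+2}(x)-A_{l+1}(x))$. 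The bound $l\ge s-2$, equivalently $i\ge s$, comes from $A_n\equiv0$ for $n<s$ (no positive partition of $n<s$ into $s$ parts), matching the boundary condition that $\mathbf{P}_x(\tau_U>t)$ vanishes on $\partial\Delta_{s-1}$, so only Jacobi eigenfunctions vanishing on the boundary contribute.

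The principal technical obstacle is the combinatorial identity collapsing the alternating-subset sum into the telescoping difference $A_{l+2}-A_{l+1}$ with precisely the Jacobi coefficients $(2-i)_l(i+1)_l/[l!(l+1)!]$. I would verify it either by direct hypergeometric manipulation using the three-term recurrence of $P_k^{(1,1)}$, or, more robustly, by checking that both sides of the stated formula satisfy the backward equation $\partial_tf=Lf$ with the correct initial and boundary data and invoking uniqueness. The termwise passage $n\to\infty$ is legitimate because for each $i\ge s$ only finitely many $l$ contribute (by the $(2-i)_l$ cutoff), and the exponential factor ensures absolute convergence of the $i$-series for $t>0$.
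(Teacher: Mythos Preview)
Your approach is essentially the same as the paper's: both reduce to the one-dimensional Wright--Fisher diffusion for the lumped coordinate $y_T=\sum_{i\in T}x_i$, invoke Kimura's spectral expansion (equation (4.15) of \cite{Kimura64}, which is exactly your Jacobi/hypergeometric formula), and then collapse the alternating-subset sum via the multinomial inclusion--exclusion identity $\sum_{a\in\Pi_{l,s}}\binom{l}{a}x^a=\sum_{T\subset U}(-1)^{|U\setminus T|}y_T^l$.

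The only substantive difference is packaging. The paper skips your polynomial approximation $A_n\to\mathbf 1_{{\rm int}(\Delta_{s-1})}$ and works directly with the face-mass probabilities $\bar p_T(t):=\mathbf P_x(x(t)\in{\rm conv}\{e_i:i\in T\})$, writing $\mathbf P_x(\tau_U>t)$ as their alternating sum and observing that $\bar p_T(t)$ is given by Kimura's $K_2$ formula with $x_1$ replaced by $y_T$. This is slightly cleaner because no limit is needed; in your language, $\bar p_T(t)=\lim_n\mathbf E_x[y_T(t)^n]$, and the paper simply starts there. What you flag as the ``principal technical obstacle''---the collapse into $A_{l+2}-A_{l+1}$ with the Jacobi coefficients---is in the paper nothing more than substituting the Kimura series into the inclusion--exclusion and applying the multinomial identity above termwise; no three-term recurrence or uniqueness argument is required. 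Your route is correct but introduces an approximation layer that the direct probabilistic argument avoids.
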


\begin{proof}
  The inclusion-exclusion argument shows the following
  expression:
  \begin{align}
    \mathbf{P}_x(\tau_U>t)=&
    \bar{p}_{1,\ldots,s}(t)
    -\sum_{i_1,\ldots,i_{s-1}}\bar{p}_{i_1,\ldots,i_{s-1}}(t)
    +\sum_{i_1,\ldots,i_{s-2}}\bar{p}_{i_1,\ldots,i_{s-2}}(t)
    \nonumber\\
    &-\cdots+(-1)^{s-1}\sum_{i=1}^s\bar{p}_{i}(t),
   \label{prop:exit:eq1} 
  \end{align}
  where $\bar{p}_{i_1,\ldots,i_u}(t)$ is the total
  mass on ${\rm conv}\{e_{i_1},\ldots,e_{i_u}\}$ and
  the summations are taken over the totality of
  distinct indices chosen from $\{1,\ldots,s\}$. For
  $K_2$, an explicit expression can be obtained by
  solving a backward equation for the diffusion
  (equation (4.15) in \cite{Kimura64}):
  \begin{equation}
    \bar{p}_1(t)=x_1+\sum_{i\ge 2}
    (2i-1)(-1)^ie^{-i(i-1)t/2}
    \sum_{l\ge 0}
    \frac{(2-i)_l(i+1)_l}{l!(l+1)!}(x_1^{l+2}-x_1^{l+1}).
    \label{prop:exit:eq2}
  \end{equation}
  A complete graph can be reduced to $K_2$ by any
  partition of the vertex set to two vertex sets
  (the reduction is defined in Section~\ref{sect:intr}).
  For example, $K_3$ is reducible to $K_2$ consisting
  of vertices $\{1+2,3\}$, where the vertex $1+2$ is
  obtained by identifying vertices 1 and 2. In the same way,
  $K_3$ is also reducible to $\{1+3,2\}$ and $\{2+3,1\}$.
  Therefore, an expression of $\bar{p}_{i_1,\ldots,i_u}(t)$ is
  obtained by the right hand side of \eqref{prop:exit:eq2}
  with replacing $x_1$ by $x_{i_1}+\cdots+x_{i_u}$.
  The inclusion-exclusion argument gives
  \begin{align}
  \sum_{a\in \Pi_{l,s}}
  \left(\begin{array}{c}l\\a\end{array}\right)x^a=    
  &(x_1+\cdots+x_s)^l
  -\sum_{i_1,\ldots,i_{s-1}}(x_{i_1}+\cdots+x_{i_{s-1}})^l+\cdots
  \nonumber\\
  &+(-1)^{s-1}\sum_{i=1}^s(x_1+\cdots+x_s),
  \label{prop:exit:eq3}
  \end{align}
  where the both sides are zero if $l<s$.
  Substituting the expression obtained by the expression
  \eqref{prop:exit:eq2} into \eqref{prop:exit:eq1}
  and collecting terms by using the equality
  \eqref{prop:exit:eq3}, the assertion follows.
\end{proof}

According to Proposition~\ref{prop:exit},
the probability distribution function of exit time from
${\rm int}(\Delta_{s-1})$ is asymptotically
$1-(2s-1)!!2^{s-1}xe^{-s(s-1)t/2}$ for large $t$,
where $(2s-1)!!=(2s-1)(2s-3)\cdots1$.
Let a sequence of vertex sets occupied by at
least one particle in Algorithm~\ref{algo:samp} be
denoted by $V$, $U^{(r-1)}$, $U^{(r-2)}$, $\ldots$,
$U^{(1)}=\{i\}$ for a vertex $i\in V$. If the exit time
for $U^{(s)}$ followed the exponential distribution
of mean $|E_{U^{(s)}}|=s(s-1)/2$ (of course, not exactly
true), the expectation of the waiting time of
a sample path is absorbed into the vertex would be
$2(1-r^{-1})$ for large $r$. This rough argument
suggests that the expectation of the computation cost
of Algorithm~\ref{algo:samp} would be $O(rN^2)$ for
a complete graph, because an iteration of Steps 3 and
4 can be executed in $O(r)$. Luby's algorithm for
finding an independent set described in
Section~\ref{sect:intr} demands $O(1)$ with using
$O(r^2)$ processors.

\subsection{Bayesian graph selection}

Consider sampling of particles of size $n$ from
the unique stationary state of the adjoint Markov semigroup
$\{\tilde{T}^*\}$ on $\mathcal{P}(\Delta_{r-1})$
induced by the Markov semigroup $\{T_t\}$ associated
with the diffusion $(\tilde{x}(t),\tilde{P}_x)$ appeared in
Theorem~\ref{theo:stat_k} such that $a_i$ particles
of a graph $\mathcal{G}=(V,E)$ are taken from
the vertex $i\in V$. We assume the probability of
taking a sample does not depend on the order of
particles, namely, exchangeable. Such probabilities
constitute the multinomial distribution, namely,
a probability measure on ordered non-negative integer
partitions of $n$ as
\begin{equation*}\label{sample}
  q_a:=\left(\begin{array}{c}n\\a\end{array}\right)
  x^a, \quad a\in \Pi_{n,r}^{\ge 0},
  \quad r=|V|,
\end{equation*}
satisfying $\sum_a q_a(t)=1$. The moment $m_a$ defined
by \eqref{moment_k} is the expectation of
the {\it sample probability} $q_a$ up to
the multinomial coefficient:
\begin{equation}
  \tilde{\mathbf{E}}_{\nu_\alpha} q_a
  =\left(\begin{array}{c}n\\a\end{array}\right)
  \tilde{\mathbf{E}}_{\nu_\alpha} x^a
  =\left(\begin{array}{c}n\\a\end{array}\right)
  m_a(\alpha).
  \label{mlike}  
\end{equation}

Before proceeding to discuss computational
issue, we present a motivating problem in Bayesian
statistics. The expected sample
probability \eqref{mlike} is a mixture of multinomial
distributions of parameters $x$ over the stationary
state $\nu_\alpha$ of the adjoint Markov semigroup
$\{\tilde{T}^*\}$. In statistical terminology,
the sample probability $q_a$ and the expectation
\eqref{mlike} are called the likelihood and
the {\it marginal likelihood,} respectively, and
$\nu_\alpha$ is called the {\it prior distribution}
for the parameter $x\in\Delta_{r-1}$.

Suppose we are interested in selecting a graphical
model consisting of four vertices from three
candidate models: a star graph $S_3$, a cycle graph
$C_4$, and a complete graph $K_4$ (Figure 3).
\begin{figure}
  \begin{center}
  \includegraphics[width=0.7\textwidth]{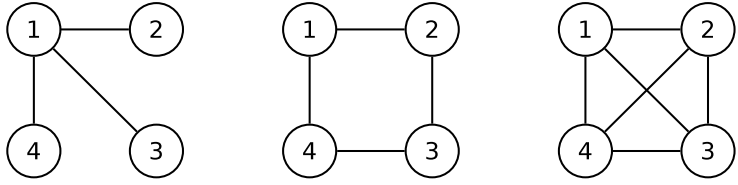}
  \caption{Three candidate graphical models:
    a star graph $S_3$, a cycle graph $C_4$, and
    a complete graph $K_4$.} 
  \label{fig3}
  \end{center}
\end{figure}
For this purpose, we employ stationary states
$\nu_\alpha$ as the prior distributions. As we have
seen in Example~\ref{exam:dir-multi}, the prior
distribution for $K_4$ is the Dirichlet distribution,
but for the prior distributions for other graphs closed
form expressions of the distribution function are 
not known. Suppose we have a sample consisting of
two particles. If it is $e_1+e_3$, by solving
the recurrence relation \eqref{mom_k}, we obtain
the expected sample probabilities under $S_3$, $C_4$,
and $K_4$ as
\[
  \frac{\alpha}{2(1+4\alpha)}, \quad \frac{1}{8},
  \quad \text{and} \quad \frac{\alpha}{2(1+4\alpha)}, 
\]
respectively. On the other hand, if the sample is
$e_2+e_4$, they are
\[
  \frac{1}{8}, \quad \frac{1}{8}, \quad \text{and}
  \quad \frac{\alpha}{2(1+4\alpha)}.
\]
If $\alpha$ is small, $e_1+e_3$ supports $C_4$, while
$e_2+e_4$ does not support $K_4$. This is reasonable,
because the vertices set $\{1,3\}$ is an independent
set of $C_4$, but not an independent set of $S_3$ and
$K_4$. On the other hand, the set $\{2,4\}$ is not
an independent set for $K_4$, but an independent set
of $S_3$ and $C_4$. The ratio of marginal likelihoods
is called the Bayes factor, which is a standard model
selection criterion in Bayesian statistics (see, e.g.,
Section 6.1 of \cite{BS94}). If a sample is $e_1+e_3$,
the Bayes factor of $C_4$ to $S_3$ or $K_4$ are
$1+1/(4\alpha)$. Therefore, $C_4$ is supported if $\alpha$
is small, while all graphs are equivalent if $\alpha$
is large. We do not discuss details of statistical
aspects including how to choose $\alpha$, but it is
worthwhile to mention that positive $\alpha$ improves
stability of model selection especially for small
sample. In fact, if $\alpha$ is small, the Bayes factor
drastically changes by adding a sample unit.
Suppose we have a sample $e_1+e_3$ and
take an additional sample unit. If it is $e_2$,
the expected sample probabilities for the sample
$e_1+e_2+e_3$ under $S_3$, $C_4$, and $K_4$
are
\[
  \frac{3\alpha(1+12\alpha)}{32(1+3\alpha)(1+4\alpha)}, \quad
  \frac{3\alpha(1+12\alpha)}{32(1+3\alpha)(1+4\alpha)},
  \quad \text{and} \quad \frac{3\alpha^2}{4(1+2\alpha)(1+4\alpha)}, 
\]
respectively. The Bayes factor of $C_4$ to $S_3$ is
unity, which means that the graphs $C_4$ and $S_3$
are equivalent. This conclusion is quite different
from that deduced from the sample $e_1+e_3$.
In the limit of large $\alpha$, by
Corollary~\ref{coro:stat_k}, the expected sample
probability of any graph follows the unique limit
distribution, the multinomial distribution.

Now let us discuss computation of expected sample
probabilities. A closed form expression of
the stationary state $\nu_\alpha$ of the adjoint
Markov semigroup $\{\tilde{T}^*\}$ is not known
for generic graphs, nevertheless, we can compute
the expected sample probabilities of any graph by
solving the system of recurrence relations
\eqref{mom_k}. Solving the system becomes prohibitive
as the sample size $n$ grows, but following algorithm,
which is a byproduct of Theorem~\ref{theo:stat_k},
provides an unbiased estimator of the expected sample
probability.

\begin{algorithm}~\label{algo:samp}
  \begin{itemize}
  \item[] Input: A sample taken from a graph
    $\mathcal{G}=(V,E)$ consisting of a vector
    $a\in\Pi_{n,r}$, $r=|V|$ and the parameter value
    $\alpha>0$.
  \item[] Output: A random variable following
    $\exp\{-\int_0^{\tau}\tilde{k}(\tilde{a}(s))ds\}$
    appeared in \eqref{theo:stat_k:eq1}.
  \end{itemize}
  \begin{itemize}
  \item[] Step 1: Set $k=0$.
  \item[] Step 2: Get a random number $T$ following
    the exponential distribution of mean
    $\sum_{i=1}^r d_ia_i(a_i-1)/2+\alpha|a|/2$.
  \item[] Step 3: $k\leftarrow k+\tilde{k}(a)T$.    
  \item[] Step 4: Divide the interval $[0,1]$ with
    the ratio\\
    $a_1(a_1-1): \cdots :a_1(a_1-1):
     a_2(a_2-1): \cdots :a_2(a_2-1):\cdots:
     a_r(a_r-1): \cdots :a_r(a_r-1):
     \alpha a_1: \cdots :\alpha a_r$\\
    such that $a_i(a_i-1)$, $i\in \{1,\ldots,r\}$
    appears $d_i$ times.
  \item[] Step 5: Get a random number $U$ following
    the uniform distribution on $[0,1]$.
  \item[] Step 6: If $U$ falls in the $j$-th interval
    of $a_i(a_i-1)$, let $a_i\leftarrow a_i-1$,
    $a_j\leftarrow a_j+1$. Otherwise, if $U$ falls in
    the interval of $\alpha a_i$, let $a_i\leftarrow a_i-1$.
  \item[] Step 7: If $|a|=0$, output $e^{-k}$.
    Otherwise, go to Step 2.
  \end{itemize}
\end{algorithm}

By Corollary~\ref{coro:mom_k}, the output of
Algorithm~\ref{algo:samp} is an unbiased estimator of
$m_a(\alpha)$, which gives the expected sample
probability \eqref{mlike} by multiplying
the multinomial coefficient.

An attractive property of Algorithm~\ref{algo:samp}
as a Markov chain Monte Carlo is that it is a direct
sampler, namely, it generates random variables
independently and exactly follow the target
distribution. In fact, this algorithm can be
regarded as a variant of a direct sampling algorithm
called coupling from the past (see, e.g., Chapter 25
of \cite{LP17} for a concise summary). Regarding
a sample as being generated from the infinite past
and the time is going backward, the time when all
particles are erased is the time when the sample path
can be regarded as that comes from infinite past,
because the sample path does not depend on any events
older than the time. We have the following estimate
of steps needed to complete the procedure.

\begin{proposition}\label{prop:samp}
  For a sample of size $n$, the expected number of
  steps needed to complete Algorithm~\ref{algo:samp}
  to obtain an unbiased estimator of the expected
  sample probability \eqref{mlike} is
  $O\{|E|(n+n(n-1)/(2\alpha))\}$ for large $|E|$.
\end{proposition}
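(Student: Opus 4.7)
My plan is to identify each iteration of Algorithm~\ref{algo:samp} with one transition of the embedded discrete-time skeleton of the dual Markov chain $(\tilde{a}(t),\tilde{\mathbf{P}}_a)$. Since the algorithm halts exactly when the chain is absorbed at $\tilde{a}=0$, it suffices to bound the expected number of such transitions before absorption. Following the rate matrix \eqref{rate_k}, I would split transitions into two classes: \emph{collisions} $a\to a-e_i+e_j$, which preserve $|a|$, and \emph{erasures} $a\to a-e_i$, which decrease $|a|$ by one. Since $|a|$ goes deterministically from $n$ to $0$, the number of erasures is exactly $n$, so only the expected number of collisions requires real work.

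Let $T_k$ denote the number of transitions that occur while $|\tilde{a}(s)|=k$, so the total iteration count equals $\sum_{k=1}^n T_k$. At any state $a$ with $|a|=k$, the total exit rate is $R_a=\sum_{i\in V}d_ia_i(a_i-1)/2+\alpha k/2$, and the probability that the next transition is the phase-ending erasure is $p_e(a)=(\alpha k/2)/R_a$. The key step is a uniform-in-$a$ bound on the collision rate in terms of $|E|$ and $k$. I would rewrite
\[
\sum_{i\in V} d_i a_i(a_i-1)=\sum_{\{i,j\}\in E}\bigl[a_i(a_i-1)+a_j(a_j-1)\bigr]
\]
and apply the elementary inequality $a_i(a_i-1)+a_j(a_j-1)\le (a_i+a_j)(a_i+a_j-1)\le k(k-1)$, which yields $\sum_{i\in V} d_i a_i(a_i-1)/2\le |E|k(k-1)/2$ uniformly over all $a$ with $|a|=k$. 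Consequently $p_e(a)\ge \alpha/(|E|(k-1)+\alpha)$ throughout the phase.

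Stochastic domination by a geometric random variable with success probability $\alpha/(|E|(k-1)+\alpha)$ then gives $\mathbf{E}\,T_k\le |E|(k-1)/\alpha+1$, and summing over $k=1,\ldots,n$,
\[
\mathbf{E}[\text{total iterations}]\le \sum_{k=1}^n\left(\frac{|E|(k-1)}{\alpha}+1\right)=\frac{|E|n(n-1)}{2\alpha}+n,
\]
which is $O\{|E|(n+n(n-1)/(2\alpha))\}$ as $|E|\to\infty$. The main obstacle I foresee is precisely the uniform bound on the collision rate: the pointwise quantity $\sum_{i\in V}d_ia_i(a_i-1)$ varies widely with how particles are distributed across the graph, and a careless bound would produce an unrelated graph invariant such as $\max_i d_i$ rather than $|E|$. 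The edge-summation identity combined with the quadratic inequality above is what makes the desired $|E|$ factor explicit and brings the estimate into the claimed form.
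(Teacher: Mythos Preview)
Your argument is correct: the edge-summation identity $\sum_{i\in V}d_ia_i(a_i-1)=\sum_{\{i,j\}\in E}\bigl[a_i(a_i-1)+a_j(a_j-1)\bigr]$ together with the quadratic bound gives the uniform estimate $\sum_i d_i a_i(a_i-1)\le|E|\,k(k-1)$ at level $|a|=k$, and the geometric-domination step then yields $\mathbf{E}[\text{iterations}]\le|E|\,n(n-1)/(2\alpha)+n$, which is indeed $O\{|E|(n+n(n-1)/(2\alpha))\}$.

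The paper's route is genuinely different. It interprets ``steps'' as elementary operations rather than loop iterations: it first bounds the expected number of iterations by $O(n+n(n-1)/(2\alpha))$ with \emph{no} graph-dependent factor, via the lower bound $p_e(a)\ge\alpha/(m+\alpha-1)$ obtained from $\sum_i a_i^2\le m^2$, and only afterwards multiplies by $O(|E|)$ because Steps~4 and~6 of the algorithm cost $d_1+\cdots+d_r+r=2|E|+r$ operations each. In writing the erasure probability as $\alpha m/(\sum_i a_i^2+(\alpha-1)m)$ the paper has tacitly replaced the collision weight $\sum_i d_i a_i(a_i-1)$ by $\sum_i a_i(a_i-1)$; since $d_i\ge 1$ this substitution goes the wrong way for a \emph{lower} bound on $p_e$, so your treatment of the degrees via the edge identity is the more careful one. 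The trade-off is that the paper's decomposition, when it works, cleanly separates ``how many transitions'' from ``how expensive each is'' and would give a graph-independent iteration count; your approach folds the graph dependence into the iteration bound itself and delivers the stated estimate directly for the transition count.
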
  

\begin{proof}
  For a state $a$ satisfying $|a|=m$, the probability
  that a particle is erased at the next step is bounded
  below:
  \[
  \frac{\alpha m}{\sum_{i=1}^r a_i^2+(\alpha-1)m}
  \ge \frac{\alpha}{m+\alpha-1}.
  \]
  Therefore, the waiting time of an erase of a particle
  is stochastically smaller than the waiting time of
  an event following the geometric distribution of
  waiting time $(m+\alpha-1)/\alpha$. Sum of the
  waiting times from $m=1$ to $m=n$ is
  $O(n+n(n-1)/(2\alpha))$. Steps 4 and 6 demand
  $d_1+\cdots+d_r+r=2|E|+r=O(|E|)$ steps for large
  $|E|$. Therefore, the assertion follows.
\end{proof}  

We have focused on the diffusion with drift, but
moments of the marginal distribution of the diffusion
without drift at a given time, i.e., \eqref{moment},
can be computed by an analog to
Algorithm~\ref{algo:samp}. The problem reduces to
solving the system of differential equations
\eqref{mom}. We omit the discussion, but a similar
problem for a complete graph $K_4$ was discussed in
\cite{Mano13}.

\section{Discussion}\label{sect:disc}

We have studied diffusions taking value in probability
measures on a graph whose masses on each vertices satisfy
the stochastic differential equations of the forms
\eqref{sde} and \eqref{sde_k} by using their dual Markov
chains on integer partitions and on finite integer
lattices, respectively. Many problems remain to
be solved, especially for \eqref{sde}. First of all,
a formal proof of the existence of the semigroup
$\{T_t\}$ associated with the generator \eqref{gen}
should be established, which demands pathwise uniqueness
of the solution of \eqref{sde}. As we have emphasized in
the text, some arguments, especially those after
Propositions~\ref{prop:abs} and \ref{prop:exit}, are
rough and restrictive. They could be improved.
Stationary states of the Markov semigroup need further
studies. A counter part of Theorem 1.5 of \cite{SU86} or
Theorem~\ref{theo:stat2} on regularity of stationary
states could be established by detailed analysis of
the diffusion. Further properties of the diffusion,
such as absorption probabilities into a stationary state
and the waiting times are interesting. To obtain explicit
expressions of them are challenging, but such expressions
would be helpful for further understanding of the diffusion.

Two applications of the diffusions are discussed:
analysis of an algorithm to find an independent set
of a graph, and a Bayesian graph selection based on
computation of expected sample probability by using
coupling from the past. Further applications and
targets for modeling may exist.

For coalescents mentioned in Remark~\ref{rema:coa},
properties of a ``genealogy'' of a sample, which is a
sample path of the dual Markov chain, are intensively
studied because a genealogy itself is used as a stochastic
model of DNA sequence variation (see, e.g., \cite{Durrett08}).
Random graphs such as Figures 1 and 2 are counterparts
of such genealogies. Study of properties of such random
graphs would be interesting.

\section*{Acknowledgements}
The author thanks Prof. Yoshiaki Itoh for introducing
there work \cite{IMS98} to him.
An earlier version of this work was
presented at a workshop on coalescent theory at
Centre de Recherches Math\'ematiques, University of
Montreal, in October 2013.
The author is supported in part by JSPS KAKENHI Grant
20K03742 and was supported in part by JST Presto Grant,
Japan.

\begin{flushleft}

Shuhei Mano\\
The Institute of Statistical Mathematics, Tokyo 190-8562, Japan\\
E-mail: smano@ism.ac.jp

\end{flushleft}

\end{document}